\def\BibTeX{{\rm B\kern-.05em{\sc i\kern-.025em b}\kern-.08em
    T\kern-.1667em\lower.7ex\hbox{E}\kern-.125emX}}
\newtheorem{lemma}{Lemma}
\newtheorem{theorem}{Theorem}
\newtheorem{assumption}{Assumption}
\newtheorem{remark}{Remark}
\title{\Large \bf Decentralized Approximate Newton Methods for Convex Optimization on Networked Systems}
\author{Hejie Wei, Zhihai Qu, Xuyang Wu, Hao Wang, and Jie Lu
\thanks{This work has been supported by the National Natural Science Foundation of China under grant 61603254 and the Natural Science Foundation of Shanghai under grant 16ZR1422500.}
\thanks{Hejie Wei is with the School of Statistics and Mathematics, Shanghai Lixin University of Accounting and Finance, 201209 Shanghai, China (e-mail: 20190057@lixin.edu.cn). Zhihai Qu, Xuyang Wu, Hao Wang, and Jie Lu are with the School of Information Science and Technology, ShanghaiTech University, 201210 Shanghai, China (e-mail: \{quzhh1, wuxy, wanghao1, lujie\}@shanghaitech.edu.cn).}}
\begin{document}

\maketitle
\thispagestyle{empty}
\pagestyle{empty}

\begin{abstract}
In this paper, a class of Decentralized Approximate Newton (DEAN) methods for addressing convex optimization on a networked system are developed, where nodes in the networked system seek for a consensus that minimizes the sum of their individual objective functions through local interactions only. The proposed DEAN algorithms allow each node to repeatedly perform a local approximate Newton update, which leverages tracking the global Newton direction and dissipating the discrepancies among the nodes. Under less restrictive problem assumptions in comparison with most existing second-order methods, the DEAN algorithms enable the nodes to reach a consensus that can be arbitrarily close to the optimum. Moreover, for a particular DEAN algorithm, the nodes linearly converge to a common suboptimal solution with an explicit error bound. Finally, simulations demonstrate the competitive performance of DEAN in convergence speed, accuracy, and efficiency.
\end{abstract}

\begin{IEEEkeywords}
Distributed optimization, network optimization, second-order methods
\end{IEEEkeywords}
\section{INTRODUCTION}

\label{sec:introduction}

In many engineering applications such as network state estimation \cite{Rabbat04}, network resource allocation \cite{Beck14}, and distributed learning \cite{Bach14}, nodes in a networked system often need to cooperate with each other in order to minimize the sum of their individual objective functions.

There have been a large number of discrete-time decentralized/distributed algorithms for such in-network optimization problems, which allow the nodes to address the problem by means of interacting with their neighbors only. Most of these algorithms are first-order methods, where the nodes utilize subgradients/gradients of their local objectives to update (e.g., \cite{Rabbat04, Nedic10, ZhuM12, Beck14, ShiW15, LinP16, YuanK16, QuG17, Nedic17, XiC18, ShiC18}). However, the first-order algorithms may suffer from slow convergence, especially when the problem is ill-conditioned. This motivates the development of decentralized second-order methods, where the Hessian matrices of certain objectives are involved in computing the iterates so that the convergence may be accelerated. 

The existing decentralized second-order methods can be roughly classified into two categories. The first category rely on second-order approximations of certain dual-related objectives. For instance, the decentralized Exact Second-Order Method (ESOM) \cite{Mokhtari16A} considers a second-order approximation of an augmented Lagrangian function, and the Decentralized Quadratically Approximated ADMM (DQM) \cite{Mokhtari16B} introduces a quadratic approximation to a decentralized version of the Alternating Direction Method of Multipliers (ADMM). The Primal-Dual Quasi-Newton method (PD-QN) \cite{Eisen19} performs quasi-Newton updates on both primal and dual variables to optimize an augmented Lagrangian function. The SDD-solver-based Newton algorithm proposed by \cite{Tutunov19} takes advantage of the sparsity of the dual Hessian and approximates the Newton directions up to arbitrary accuracy through integrating a solver for symmetric diagonally dominant (SDD) linear equations.

The second category is the Newton-type methods, such as the Decentralized Broyden-Fletcher-Goldfarb-Shanno (D-BFGS) method \cite{Eisen17}, the Network Newton (NN) method \cite{Mokhtari17}, the Asynchronous Network Newton method (ANN) \cite{Mansoori19}, the Distributed Quasi-Newton (DQN) method \cite{Bajovic15}, and the Newton-Raphson Consensus (NRC) method \cite{Varagnolo16}. Among these methods, D-BFGS, NN, ANN, and DQN employ a penalized objective function in order to relax a consensus constraint and approximate the Newton direction of the penalized objective in a decentralized manner. As a result, these methods converge to a suboptimal solution. NRC utilizes an average consensus scheme to approximate the Newton-Raphson direction in a distributed fashion. Although NRC may converge to the exact optimal solution, no explicit parameter condition for convergence is provided.



In this paper, we propose a family of discrete-time \underline{De}centralized \underline{A}pproximate \underline{N}ewton methods, referred to as DEAN, for solving convex optimization on networked systems. In the DEAN algorithms, each node evolves along a local approximate Newton direction, which is jointly determined by the Hessian inverse of the node's local objective function and a possibly nonlinear consensus operation. This combines the approximation of the global Newton direction with the dissipation of the nodes' disagreements. Moreover, a special case of DEAN can be viewed as a finite-difference discretization of the continuous-time ZGS algorithms in \cite{LuJ12}, yet DEAN can tackle more general problems and the convergence analysis of DEAN is significantly different from that in \cite{LuJ12} (cf. Remark~\ref{rem:ZGS}). 

We show that the DEAN algorithms asymptotically drive all the nodes to a consensus that can lie in an \emph{arbitrarily small} neighborhood of the optimal solution, when solving a class of \emph{locally} strongly convex problems with \emph{locally} Lipschitz objective gradients. We also show that a particular DEAN algorithm achieves a \emph{linear rate} of convergence to a suboptimal solution with an explicit error bound given. Note that the linear convergence rate of DEAN is derived under less restrictive or different problem conditions compared to most prior first-order methods \cite{ShiW15,YuanK16,Nedic17} and second-order methods \cite{Mokhtari16A,Mokhtari16B,Mokhtari17,Bajovic15,Varagnolo16,LuJ12,Eisen17,Eisen19,Tutunov19,Mansoori19}. In particular, these existing works assume global strong convexity of the objective functions and global Lipschitz continuity of the objective gradients to establish linear convergence, which are unnecessary for DEAN. Furthermore, simulation results illustrate the superior performance of DEAN in convergence speed, accuracy, and computational efficiency for addressing logistic regression in comparison with a number of start-of-the-art first-order and second-order methods.


The outline of this paper is as follows: Section~\ref{sec:probform} formulates the problem. Section~\ref{sec:AZGS} describes the proposed DEAN algorithms and Section~\ref{sec:convanal} is dedicated to the convergence analysis. Section~\ref{sec:numeexam} presents the simulation results. Concluding remarks are provided in Section~\ref{sec:conclusion}. All the proofs are in the appendix.

A preliminary, $5$-page conference version of this paper can be found in \cite{WeiH18}, which contains no proof. This paper significantly expands \cite{WeiH18} by including new convergence rate analysis, new numerical study, more detailed discussions on algorithm design, theoretical results, and comparisons with existing works, as well as all the proofs. 

\subsection{Notation} 

We use $\|\cdot\|$ to denote the Euclidean norm, $\{\cdot,\cdot\}$ the unordered pair, $|\cdot|$ the absolute value of a real number or the cardinality of a set, and $\text{range}(\cdot)$ the range of a matrix. In addition, we use $I_n$ to represent the $n\times n$ identity matrix and $0$ to represent a zero matrix of proper size. For any $z_1,\ldots,z_N\in \mathbb{R}^n$, $\mathbf{z}=[z_1;\dots;z_N]\in\mathbb{R}^{nN}$ is the vector obtained by stacking $z_1,\ldots,z_N$. For any $c_1,\ldots,c_n\in\mathbb{R}$, $\text{diag}(c_1,\ldots,c_n)$ is the diagonal matrix whose diagonal entries are $c_1,\ldots,c_n$. For any $A, B\in \mathbb{R}^{n\times n}$, $A\succeq B$ means $A-B$ is positive semidefinite and $A\succ B$ means $A-B$ is positive definite. For any symmetric positive semidefinite matrix $H\in\mathbb{R}^{n\times n}$, we use $\lambda_i(H)$ to denote the $i$th smallest eigenvalue of $H$, $\lambda_{\max}(H)$ the largest eigenvalue of $H$, and $H^{\dagger}$ the pseudoinverse of $H$. For any set $\mathcal{C}\subseteq\mathbb{R}^n$, $\operatorname{conv}\mathcal{C}$ is the convex hull of $\mathcal{C}$ and $P_{\mathcal{C}}(x)$ is the projection of $x\in\mathbb{R}^n$ onto $\mathcal{C}$. For any differentiable function $f:\mathbb{R}^n\rightarrow\mathbb{R}$, $\nabla f(x)$ denotes the gradient of $f$ at $x\in\mathbb{R}^n$ and, if $f$ is twice differentiable, $\nabla^2 f(x)$ represents the Hessian matrix of $f$ at $x$. 

\section{Problem Formulation}\label{sec:probform}

\subsection{Preliminaries}

Consider a twice continuously differentiable function $f:\mathbb{R}^n\rightarrow\mathbb{R}$. It is said to be \emph{locally strongly convex} if for any convex and compact set $\mathcal{C}$, there exists $\theta_{\mathcal{C}}>0$ such that 
\begin{align*}
\nabla^2 f_i(x)&\succeq\theta_{\mathcal{C}}I_n,\;\forall x\in \mathcal{C}.
\end{align*}
This also indicates that
\begin{align*}
f(y)-f(x)-\nabla f(x)^T(y-x)&\ge\theta_{\mathcal{C}}\|y-x\|^2/2,\;\forall x,y\in \mathcal{C},\displaybreak[0]\\
(\nabla f(y)-\nabla f(x))^T(y-x)&\ge\theta_{\mathcal{C}}\|y-x\|^2,\;\forall x,y\in \mathcal{C}.
\end{align*}
Here, $\theta_{\mathcal{C}}>0$ is called the \emph{convexity parameter} of $f$ on $\mathcal{C}$. We say $f$ is \emph{globally strongly convex} if the above conditions hold for all points in $\mathbb{R}^n$ with a uniform convexity parameter that is positive. 

A vector-valued or matrix-valued function $h$ is said to be \emph{locally Lipschitz continuous} if for any compact set $\mathcal{C}$ contained in the domain of $h$, there exists $L_{\mathcal{C}}\ge0$ such that the Lipschitz condition $\|h(x)-h(y)\|\le L_{\mathcal{C}}\|x-y\|$ holds for all $x,y\in \mathcal{C}$. Also, $L_{\mathcal{C}}$ is said to be the \emph{Lipschitz constant} of $h$ on $\mathcal{C}$. Likewise, if the Lipschitz condition holds for all $x,y$ in the domain with a uniform Lipschitz constant that is nonnegative and finite, then $h$ is said to be \emph{globally Lipschitz continuous}.

\subsection{Optimization on Networked Systems}

We model the networked system as an undirected, connected graph $\mathcal{G}=(\mathcal{V},\mathcal{E})$, where $\mathcal{V}=\{1,2,\dots,N\}$ is the set of $N\ge2$ nodes and $\mathcal{E}\subseteq\{\{i,j\}:i,j\in \mathcal{V},\;i\neq j\}$ is the set of links. For each node $i\in\mathcal{V}$, the set of its neighbors is denoted by $\mathcal{N}_i=\{j\in \mathcal{V}:\{i,j\}\in \mathcal{E}\}$. The nodes need to solve
\begin{align}\label{pro}
\min_{x\in\mathbb{R}^n} \sum_{i\in \mathcal{V}} f_i(x),
\end{align}
where each $f_i:\mathbb{R}^n\rightarrow\mathbb{R}$ is the local objective function of node $i\in\mathcal{V}$ and satisfies the following assumption.

\begin{assumption}\label{f}
For each $i\in \mathcal{V}$, $f_i$ is twice continuously differentiable and locally strongly convex. In addition, $f_i$ has a minimizer and $\nabla^2 f_i$ is locally Lipschitz continuous.
\end{assumption}

Assumption~\ref{f} suggests that each $f_i$ has a unique minimizer $x_i^*\in\mathbb{R}^n$, and there is a unique optimal solution $x^*\in\mathbb{R}^n$ to \eqref{pro}. The twice continuous differentiability of $f_i$ is a standard assumption for second-order methods, which implies $\nabla f_i$ is locally Lipschitz continuous (but not necessarily globally Lipschitz). It is also assumed in the existing second-order methods \cite{Mokhtari16A,Mokhtari16B,Mokhtari17,Bajovic15,Varagnolo16,LuJ12,Tutunov19,Mansoori19}, while the quasi-Newton methods \cite{Eisen17,Eisen19} only require $f_i$ to be twice differentiable.

Among the existing distributed second-order methods, most of them \cite{Mokhtari16A,Mokhtari16B,Mokhtari17,Bajovic15,Varagnolo16,LuJ12,Eisen19,Tutunov19,Mansoori19} assume the $f_i$'s to be globally strongly convex, which is more restricted than the \emph{local} strong convexity in Assumption~\ref{f}. One example of local but not global strong convexity is the objective function of logistic regression \cite{Bach14}, i.e., $\sum_{i=1}^N\sum_{j=1}^{m_i}\ln(1+\operatorname{exp}(-v_{ij} u_{ij}^Tx))$, where each $u_{ij}\in\mathbb{R}^n$ is a random feature vector with given label $v_{ij}\in\{-1,1\}$ and the sample number $m_i$ is a sufficiently large integer. We will discuss the problem assumptions of this paper and the existing works more detailedly in Section~\ref{ssec:convcomp}.

\section{Decentralized Approximate Newton Methods}\label{sec:AZGS}

In this section, we develop a class of decentralized Newton-like algorithms to address problem~\eqref{pro}. 

To solve problem~\eqref{pro} in a decentralized way, we reformulate problem~\eqref{pro} by separating the global objective function and adding a consensus constraint as follows:
\begin{align}
\min_{\mathbf{x}\in\mathbb{R}^{nN}}\;&\;F(\mathbf{x})=\sum_{i\in \mathcal{V}} f_i(x_i)\nonumber
\\
\text{s.t.}\;\;&\;x_i=x_j,\;\forall i,j\in\mathcal{V},\label{eq:equivprob}
\end{align}
where $x_i\in\mathbb{R}^n$ $\forall i\in\mathcal{V}$ and $\mathbf{x}=[x_1;\dots;x_N]\in\mathbb{R}^{nN}$.

We first consider applying the classic Newton method
\begin{equation}\label{alg:newton}
\mathbf{x}^{k+1}=\mathbf{x}^{k}-\alpha(\nabla^2 F(\mathbf{x}^{k}))^{-1}\nabla F(\mathbf{x}^{k}),
\end{equation}
where $\alpha>0$ is the Newton step-size. Apparently, the global Newton direction of $F$ in \eqref{alg:newton} can be computed in parallel by the nodes. However, \eqref{alg:newton} does not take into consideration the consensus constraint $x_i=x_j$ $\forall i,j\in\mathcal{V}$ in \eqref{eq:equivprob}, so that it in general cannot converge to a feasible solution. To overcome this issue, below we design a Newton-like direction which not only approximates the global Newton direction of $F$ but also attempts to follow the consensus constraint. 

To this end, we start with a special case of problem~\eqref{pro} with $f_i(x)=\|x-b_i\|^2/2$, $b_i\in\mathbb{R}^n$, so that the problem reduces to an average consensus problem for seeking $\sum_{i\in\mathcal{V}}b_i/N$. This can be solved by a distributed linear consensus algorithm \cite{Olfati-Saber04}:
\begin{align}
\mathbf{x}^{k+1}=\mathbf{x}^{k}-\alpha(\hat{H}_\mathcal{G}\otimes I_n) \mathbf{x}^{k}.\label{eq:consensus}
\end{align}
Here, $\hat{H}_\mathcal{G}=\hat{H}_\mathcal{G}^T\in\mathbb{R}^{N\times N}$ is a weight matrix given by
\begin{eqnarray}\label{matrix:H_G} 
[{\hat{H}_\mathcal{G}}]_{ij}=\left\{
\begin{array}{ll}
\sum\limits_{s\in\mathcal{N}_i}h_{\{i,s\}}, & \text{if}\; i=j,\\
-h_{\{i,j\}},& \text{if}\; \{i,j\}\in\mathcal{E},\\
0,&\text{otherwise},
\end{array}
\right.
\end{eqnarray} 
with $h_{\{i,j\}}>0$ $\forall\{i,j\}\in\mathcal{E}$. It is shown in \cite[Theorem 4]{Olfati-Saber04} that by letting $\alpha\in(0,1/\max_{i\in\mathcal{V}}[{\hat{H}_\mathcal{G}}]_{ii})$ and $\mathbf{x}^0=[b_1;\ldots;b_N]$, $\lim_{k\rightarrow\infty}\mathbf{x}^k=[\frac{1}{N}\sum_{i\in\mathcal{V}}b_i;\ldots;\frac{1}{N}\sum_{i\in\mathcal{V}}b_i]$.

In the above special case, because $\nabla^2 F(\mathbf{x})\equiv I_{nN}$, the linear consensus algorithm \eqref{eq:consensus} is identical to
\begin{align*}
\mathbf{x}^{k+1}=\mathbf{x}^{k}-\alpha(\nabla^2 F(\mathbf{x}^{k}))^{-1}(\hat{H}_\mathcal{G}\otimes I_n) \mathbf{x}^{k}.
\end{align*}
This can also be viewed as being obtained by replacing the gradient term $\nabla F(\mathbf{x}^{k})$ in the Newton method \eqref{alg:newton} with the linear consensus term $(\hat{H}_\mathcal{G}\otimes I_n) \mathbf{x}^{k}$. By assigning the $i$th $n$-dimensional block $x_i^k$ of $\mathbf{x}^k$ to node $i$, the above equation can be written as
\begin{equation}\label{iter:special}
x_i^{k+1}\!=x_i^{k}\!+\alpha(\nabla^2 f_i({x}_i^{k}))^{-1}\!\sum_{j\in\mathcal{N}_i}\!h_{\{i,j\}}(x_j^{k}-x_i^{k}),\quad\forall i\in\mathcal{V}.
\end{equation}

To shed some light on the possibility of solving more general convex problems via \eqref{iter:special}, note that by rearranging \eqref{iter:special} and utilizing the structure of $\hat{H}_\mathcal{G}$ in \eqref{matrix:H_G},
\begin{align*}
&\sum_{i\in \mathcal{V}}\nabla^2f_i(x_i^k)(x_i^{k+1}-x_i^k)\displaybreak[0]\\
=&\sum_{i\in\mathcal{V}}\nabla^2f_i(x_i^k)\alpha(\nabla^2 f_i({x}_i^{k}))^{-1}\sum_{j\in\mathcal{N}_i}h_{\{i,j\}}(x_j^{k}-x_i^{k})\displaybreak[0]\\
=&\alpha\sum_{\{i,j\}\in\mathcal{E}}\left(h_{\{i,j\}}(x_j^k-x_i^k)+h_{\{i,j\}}(x_i^k-x_j^k)\right)=0,
\end{align*}
so that
\begin{align}
&\|\sum_{i\in\mathcal{V}}\nabla f_i(x_i^{k+1})-\sum_{i\in\mathcal{V}}\nabla f_i(x_i^{k})\|\nonumber\displaybreak[0]\\
=&\|\sum_{i\in\mathcal{V}}\int_0^1\nabla^2f_i(x_i^{k}+s(x_i^{k+1}-x_i^{k}))(x_i^{k+1}-x_i^{k})ds\|\nonumber\displaybreak[0]\\
=&\|\!\sum_{i\in\mathcal{V}}\!\int_0^1\!\!\!\big[\nabla^2\!f_i(x_i^{k}\!+\!s(x_i^{k+1}\!-\!x_i^{k}))\!-\!\!\nabla^2\!f_i(x_i^k)\big]\!(x_i^{k+1}\!-\!x_i^{k})ds\|\nonumber\displaybreak[0]\\
\le&\sum_{i\in\mathcal{V}}\int_0^1\|\nabla^2f_i(x_i^{k}+s(x_i^{k+1}-x_i^{k}))-\nabla^2f_i(x_i^{k})\|ds\cdot\|x_i^{k+1}-x_i^{k}\|.
\label{eq:gradientsumchange}
\end{align}
It can be seen from \eqref{eq:gradientsumchange} that if each $f_i$ is a positive definite quadratic function, i.e., 
\begin{align}
\!\!\!f_i(x)=\frac{(x-b_i)^TB_i(x-b_i)}{2},\;B_i=B_i^T\succ 0,\;b_i\in\mathbb{R}^n,\label{eq:quadratic}
\end{align}
then $\sum_{i\in\mathcal{V}}\nabla f_i(x_i^{k})$ remains constant. Under \eqref{eq:quadratic}, if we set 
\begin{align}
x_i^0=x_i^*:=\operatorname{arg\;min}_{x\in\mathbb{R}^n}f_i(x),\label{eq:xi0}
\end{align}
which guarantees $\sum_{i\in\mathcal{V}}\nabla f_i(x_i^{0})=0$, then $\sum_{i\in\mathcal{V}}\nabla f_i(x_i^{k})\equiv 0$. If, in addition, the consensus operation in \eqref{iter:special} makes the $x_i^k$'s asymptotically reach a consensus, then the consensus is exactly the optimum $x^*$.

When $f_i$ is generalized to a non-quadratic convex function, $\sum_{i\in\mathcal{V}}\nabla f_i(x_i^{k})\equiv 0$ may not be guaranteed. However, due to \eqref{iter:special} and \eqref{eq:gradientsumchange}, if the $x_i^k$'s remain bounded, then
\begin{align*}
&\|\sum_{i\in\mathcal{V}}\nabla f_i(x_i^{k+1})-\sum_{i\in\mathcal{V}}\nabla f_i(x_i^{k})\|\le C\sum_{\{i,j\}\in\mathcal{E}}\|x_i^{k}-x_j^{k}\|
\end{align*}
for some constant $C>0$. This indicates that once all the $x_i^k$'s become identical, the gradient sum $\sum_{i\in\mathcal{V}}\nabla f_i(x_i^{k})$ would no longer change. Therefore, if \eqref{eq:xi0} holds and the nodes reach a consensus quickly enough, they could eventually agree on a suboptimal solution that is sufficiently close to the optimum $x^*$. We will validate such intuitive conviction through convergence analysis in Section~\ref{sec:convanal}.

Below, we extend \eqref{iter:special} to a more general form in order to enhance its flexibility and applicability. First, as is suggested in \cite{LuJ12}, to better capture the nonlinearity of the objective gradient in the Newton method, we may extend the linear consensus term $h_{\{i,j\}}(x_j^{k}-x_i^{k})$ in \eqref{iter:special} to a nonlinear one $\nabla g_{\{i,j\}}(x_j^{k})-\nabla g_{\{i,j\}}(x_i^{k})$, where each $g_{\{i,j\}}$, $\{i,j\}\in\mathcal{E}$ is a surrogate function associated with link $\{i,j\}\in\mathcal{E}$ and satisfies Assumption~\ref{gijl} below. 

\begin{assumption}\label{gijl}
For each $\{i,j\}\in \mathcal{E}$, $g_{\{i,j\}}$ is twice continuously differentiable and locally strongly convex. 
\end{assumption}

There are numerous choices of $g_{\{i,j\}}$ satisfying Assumption~\ref{gijl}, which can be either dependent or independent of the objective functions $f_i$'s. For example, for each $\{i,j\}\in\mathcal{E}$, we may let $g_{\{i,j\}}(x)=f_i(x)+f_j(x)$ or $g_{{\{i,j\}}}(x)=\frac{1}{2}x^TA_{\{i,j\}}x$, where $A_{\{i,j\}}\in \mathbb{R}^{n\times n}$ can be any symmetric positive definite matrix known to both node $i$ and node $j$. In practice, we may empirically choose the $g_{\{i,j\}}$'s under Assumption~\ref{gijl} which yield satisfactory convergence performance.

Second, as the Newton step-size $\alpha$ is a global parameter that all the nodes need to agree on, we eliminate such centralized coordination by introducing a local step-size $\alpha_{\{i,j\}}>0$ to each link $\{i,j\}\in\mathcal{E}$, which yields
\begin{align}\label{scheme}
x_i^{k+1}=&x_i^{k}+(\nabla^2 f_i(x_i^{k}))^{-1}\sum_{j\in\mathcal{N}_i}\alpha_{\{i,j\}}(\nabla g_{\{i,j\}}(x_j^{k})-\nabla g_{\{i,j\}}(x_i^{k})),\quad\forall i\in\mathcal{V}.
\end{align}
Equation \eqref{scheme} defines a networked dynamical system, which intends to simultaneously track the global Newton direction of $F$ and facilitate the feasibility of problem~\eqref{eq:equivprob}. This, along with the initialization \eqref{eq:xi0}, yields a class of \emph{Decentralized Approximate Newton methods}, referred to as DEAN.

\begin{remark}\label{rem:ZGS}
In the special case where all the local step-sizes $\alpha_{\{i,j\}}$ $\forall\{i,j\}\in\mathcal{E}$ are identical, DEAN can be viewed as a finite-difference discretization of the continuous-time ZGS algorithms \cite{LuJ12}. Nevertheless, a centralized step-size requires additional coordination throughout the network. Moreover, the convergence analysis of DEAN (cf. Section~\ref{sec:convanal}) is more challenging than and fundamentally different from that of ZGS, because DEAN relaxes the global strong convexity condition required by ZGS to \emph{local} strong convexity and generally does not enjoy the favorable property of ZGS that $\{[y_1;\ldots;y_N]\in\mathbb{R}^{nN}\!\!:\!\sum_{i\in\mathcal{V}}\nabla f_i(y_i)=0\}$ is a positive invariant manifold.
\end{remark}

As is shown in Algorithm~\ref{alg:DEAN}, the implementation of the DEAN algorithms is fully decentralized. The initialization \eqref{eq:xi0} can be completed by the nodes on their own and the update \eqref{scheme} requires each node $i\in\mathcal{V}$ to repeatedly exchange its $x_i^{k}$ with its neighbors. In fact, we can reduce the initialization cost by allowing the existence of small errors in calculating the $x_i^*$'s, so that $\sum_{i\in\mathcal{V}}\nabla f_i(x_i^{0})$ is close to zero rather than exactly zero. This would cause straightforward (but lengthy and tedious) modifications to the analysis in Section~\ref{sec:convanal} and would not fundamentally affect the convergence results.


{
	\renewcommand{\baselinestretch}{1.05}
	\begin{algorithm} [ht]
		\caption{\small Decentralized Approximate Newton (DEAN) Method}
		\label{alg:DEAN}
		\begin{algorithmic}[1]
			\small
			\STATE \textbf{Initialization:}
			\STATE Each pair of neighboring nodes $i$ and $j$ agree on $\alpha_{\{i,j\}}>0$ and $g_{\{i,j\}}$ satisfying Assumption~\ref{gijl}.
			\STATE Each node $i\in\mathcal{V}$ sets $x_i^0=x_i^*$.
			\FOR{ $k=0,1,\ldots$}
			\STATE Each node $i\in \mathcal{V}$ sends $x_i^k$ to every neighbor $j\in\mathcal{N}_i$.
			\STATE Upon receiving $x_j^{k}$ $\forall j\in\mathcal{N}_i$, each node $i\in\mathcal{V}$ updates $x_i^{k+1}=x_i^{k}+(\nabla^2 f_i(x_i^{k}))^{-1}\sum_{j\in\mathcal{N}_i}\alpha_{\{i,j\}}(\nabla g_{\{i,j\}}(x_j^{k})-\nabla g_{\{i,j\}}(x_i^{k}))$.
			\ENDFOR
		\end{algorithmic}
	\end{algorithm}
}

\section{Convergence Analysis}\label{sec:convanal}

In this section, we analyze the convergence performance of the DEAN algorithms. 

Our analysis is based on the Lyapunov function candidate $V: \mathbb{R}^{nN}\to \mathbb{R}$ given by
\begin{align*}
V(\mathbf{x})=\sum_{i\in \mathcal{V}}f_i(x^*)-f_i(x_i)-\nabla f_i(x_i)^T(x^*-x_i).
\end{align*}
Due to Assumption~\ref{f}, $V(\mathbf{x})\ge0$ $\forall\mathbf{x}\in\mathbb{R}^{nN}$ and the equality holds if and only if $\mathbf{x}=\mathbf{x}^*=[x^*;\ldots;x^*]$. Hence, $V$ can be viewed as a measure of optimality error. The convergence of DEAN will be established upon the monotonicity of $V$. 

To this end, we introduce the following notations: For each $i\in\mathcal{V}$, let
\begin{align*}
\mathcal{C}_i=\{x\in \mathbb{R}^n: f_i(x^*)\!-\!f_i(x)\!-\!\nabla f_i(x)^T(x^*\!-\!x)\!\leq\! V(\mathbf{x}^0)\},
\end{align*}
where $\mathbf{x}^0=[x_1^0;\ldots;x_N^0]$ is the initial state given by \eqref{eq:xi0}. Clearly, $x_i^0,x^*\in\mathcal{C}_i$. We additionally impose the following condition, which ensures $\mathcal{C}_i$ $\forall i\in\mathcal{V}$ to be compact.
\begin{assumption}\label{asm:coercive}
For each $i\in\mathcal{V}$, $f_i(x^*)-f_i(x)-\nabla f_i(x)^T(x^*-x)$ is a coercive function of $x$.
\end{assumption}
If $f_i$ is globally strongly convex, then Assumption~\ref{asm:coercive} naturally holds. The converse is not true. For example, the scalar function equal to $x\ln x$ for all $x\ge1$ and equal to $\frac{1}{2}(x^2-1)$ for all $x<1$ satisfies all the conditions in Assumptions~\ref{f} and~\ref{asm:coercive}, but is not globally strongly convex.

Accordingly, for each $i\in\mathcal{V}$, there exist $\Theta_i,\theta_i\in(0,\infty)$ such that $\Theta_i I_n\succeq\nabla^2f_i(x)\succeq \theta_i I_n$ $\forall x\in\operatorname{conv}\mathcal{C}_i$. In other words, $\theta_i$ is the convexity parameter of $f_i$ on $\operatorname{conv}\mathcal{C}_i$ and $\Theta_i$ is the Lipschitz constant of $\nabla f_i$ on $\operatorname{conv}\mathcal{C}_i$, which guarantee 
\begin{align*}
\frac{\theta_i}{2}\|x-y\|^2&\le f_i(y)-f_i(x)-\nabla f_i(x)^T(y-x)\le\frac{\Theta_i}{2}\|x-y\|^2,\quad\forall x,y\in\operatorname{conv}\mathcal{C}_i.
\end{align*}
We let $\Theta:=\max_{i\in\mathcal{V}}\Theta_i>0$ and $\theta:=\min_{i\in\mathcal{V}}\theta_i>0$. Also, let $L_i\in[0,\infty)$ be the Lipschitz constant of $\nabla^2f_i$ on $\operatorname{conv}\mathcal{C}_i$, i.e., $\|\nabla^2f_i(x)-\nabla^2f_i(y)\|\le L_i\|x-y\|$ $\forall x,y\in\operatorname{conv}\mathcal{C}_i$. Furthermore, we let $\bar{\Theta}_i,\bar{\theta}_i\in(0,\infty)$ be such that $\bar{\Theta}_i I_n\succeq\nabla^2f_i(x)\succeq\bar{\theta}_i I_n$ $\forall x\in \operatorname{conv}\cup_{j\in\mathcal{V}}\mathcal{C}_j$. On the other hand, owing to Assumption~\ref{gijl}, there exist $\Gamma_{\{i,j\}},\gamma_{\{i,j\}}\in(0,\infty)$ such that $\Gamma_{\{i,j\}} I_n\succeq\nabla^2 g_{\{i,j\}}(x)\succeq \gamma_{\{i,j\}} I_n$ $\forall x\in\operatorname{conv}\{\mathcal{C}_i\cup\mathcal{C}_j\}$.

Now arbitrarily pick $\bar{\alpha}>0$ and suppose the step-sizes $\alpha_{\{i,j\}}$ $\forall\{i,j\}\in\mathcal{E}$ are selected from the predetermined interval $(0,\bar{\alpha}]$. Then, define
\begin{align*}
\mathcal{C}'_i=\operatorname{conv}\{x\in\mathbb{R}^n:\|x-y\|\le\delta_i,\;y\in\mathcal{C}_i\},
\end{align*}
where
\begin{align}
\delta_i\!=\!\frac{\bar{\alpha}}{\theta_i}\!\!\sum_{j\in\mathcal{N}_i}\!\Gamma_{\{i,j\}}\left(\sqrt{\frac{2V(\mathbf{x}^0)}{\theta_i}}\!+\!\sqrt{\frac{2V(\mathbf{x}^0)}{\theta_j}}\right)\!\in\![0,\infty).\label{eq:delta}
\end{align}
Since $\mathcal{C}_i$ is compact and $\delta_i\ge0$ is finite, $\mathcal{C}'_i$ is also compact. Similarly, we let $\Theta'_i,\theta'_i\in(0,\infty)$ be such that $\Theta'_i I_n\succeq\nabla^2f_i(x)\succeq\theta'_i I_n$ $\forall x\in\mathcal{C}'_i$, and let $L'_i\in[0,\infty)$ be such that $\|\nabla^2f_i(x)-\nabla^2f_i(y)\|\le L'_i\|x-y\|$ $\forall x,y\in\mathcal{C}'_i$. Observe that $\mathcal{C}'_i\supseteq\operatorname{conv}\mathcal{C}_i$, so that $\Theta'_i\ge\Theta_i$, $\theta'_i\le\theta_i$, and $L'_i\ge L_i$.

Once the local strong convexity and local Lipschitz continuity conditions of our problem become global, the above local convexity parameters and the local Lipschitz constants can be chosen as the global ones that take effect over $\mathbb{R}^n$. 

Below, our first result shows that $V(\mathbf{x}^k)$ is non-increasing in $k$ and quantifies its drop at each iteration.

\begin{lemma}[Monotonicity of Lyapunov function]\label{lem:monoto} 
Suppose Assumptions~\ref{f},~\ref{gijl}, and~\ref{asm:coercive} hold. Let $\mathbf{x}^k=[x_1^k;\ldots;x_N^k]$ $\forall k\ge0$ be generated by DEAN described in Algorithm~\ref{alg:DEAN} with $0<\alpha_{\{i,j\}}\le\bar{\alpha}$ $\forall\{i,j\}\in\mathcal{E}$. If, in addition,
	\begin{align}\label{eq:stepsizedrop}
	&\alpha_{\{i,j\}}<\frac{1}{2\Gamma_{\{i,j\}}}\min\Bigl\{\frac{\theta_i^2}{|\mathcal{N}_i|}\left(\Theta'_i-\frac{\theta'_i}{2}+\frac{L'_i}{2}\sqrt{\frac{2V(\mathbf{x}^0)}{\theta_i}}
	\right)^{-1},\frac{\theta_j^2}{|\mathcal{N}_j|}\left(\Theta'_j-\frac{\theta'_j}{2}+\frac{L'_j}{2}\sqrt{\frac{2V(\mathbf{x}^0)}{\theta_j}}
	\right)^{-1}\Bigr\},\;\forall\{i,j\}\in\mathcal{E},
	\end{align}
	then for each $k\ge0$,
	\begin{align}\label{eq:Vdrop}
	&V(\mathbf{x}^{k+1})\!-\!V(\mathbf{x}^k)\le-\!\sum_{i\in\mathcal{V}}\sum_{j\in\mathcal{N}_i}\!\alpha_{\{i,j\}}\gamma_{\{i,j\}}^2\|x_j^k-x_i^k\|^2\left[\left(\frac{\theta_i}{2}-\Theta_i-\frac{L_i}{2}\sqrt{\frac{2V(\mathbf{x}^0)}{\theta_i}}\right)\frac{|\mathcal{N}_i|}{\theta_i^2}\alpha_{\{i,j\}}+\frac{1}{2\Gamma_{\{i,j\}}}\right]\le0.
	\end{align}
\end{lemma}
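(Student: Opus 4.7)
The plan is to induct on $k$ under the hypothesis $V(\mathbf{x}^k)\le V(\mathbf{x}^0)$, which by the definition of $\mathcal{C}_i$ forces $x_i^k\in\mathcal{C}_i$ for every $i$; the base case $k=0$ is immediate. For the inductive step I would first establish a one-step location bound: strong convexity of $f_i$ with parameter $\theta_i$ on $\operatorname{conv}\mathcal{C}_i$ gives $\|x^*-x_i^k\|\le\sqrt{2V(\mathbf{x}^0)/\theta_i}$, and combining this with the triangle inequality, the Lipschitz estimate $\|\nabla g_{\{i,j\}}(x_j^k)-\nabla g_{\{i,j\}}(x_i^k)\|\le\Gamma_{\{i,j\}}\|x_j^k-x_i^k\|$, and $\|(\nabla^2 f_i(x_i^k))^{-1}\|\le 1/\theta_i$ yields $\|x_i^{k+1}-x_i^k\|\le\delta_i$ for $\delta_i$ as in \eqref{eq:delta}. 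By convexity of $\mathcal{C}'_i$, the segment from $x_i^k$ to $x_i^{k+1}$ then lies entirely in $\mathcal{C}'_i$, so the primed constants $\Theta'_i,\theta'_i,L'_i$ govern $\nabla^2 f_i$ and its Lipschitz continuity along the trajectory.

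The heart of the argument is a Bregman-style decomposition. Setting $\Delta x_i:=x_i^{k+1}-x_i^k$, I would write
\begin{align*}
V(\mathbf{x}^{k+1})-V(\mathbf{x}^k)=\sum_{i\in\mathcal{V}}\bigl[f_i(x_i^k)-f_i(x_i^{k+1})+\nabla f_i(x_i^{k+1})^T\Delta x_i\bigr]-\sum_{i\in\mathcal{V}}\bigl(\nabla f_i(x_i^{k+1})-\nabla f_i(x_i^k)\bigr)^T(x^*-x_i^k),
\end{align*}
and expand via the integral mean value formula $\nabla f_i(x_i^{k+1})-\nabla f_i(x_i^k)=\nabla^2 f_i(x_i^k)\Delta x_i+\tilde r_i$ with $\|\tilde r_i\|\le(L'_i/2)\|\Delta x_i\|^2$ on the segment in $\mathcal{C}'_i$. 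The DEAN update identifies $\nabla^2 f_i(x_i^k)\Delta x_i=y_i:=\sum_{j\in\mathcal{N}_i}\alpha_{\{i,j\}}(\nabla g_{\{i,j\}}(x_j^k)-\nabla g_{\{i,j\}}(x_i^k))$. Reindexing $\sum_i y_i^T(x^*-x_i^k)$ by the antisymmetry in $i\leftrightarrow j$ produces the edge sum $\sum_{\{i,j\}\in\mathcal{E}}\alpha_{\{i,j\}}(\nabla g_{\{i,j\}}(x_j^k)-\nabla g_{\{i,j\}}(x_i^k))^T(x_j^k-x_i^k)$, and the co-coercivity estimate $(\nabla g_{\{i,j\}}(u)-\nabla g_{\{i,j\}}(v))^T(u-v)\ge\gamma_{\{i,j\}}^2\|u-v\|^2/\Gamma_{\{i,j\}}$, obtained by chaining strong convexity and smoothness of $g_{\{i,j\}}$, delivers the dominant negative contribution $-\tfrac{1}{2}\sum_i\sum_{j\in\mathcal{N}_i}\alpha_{\{i,j\}}\gamma_{\{i,j\}}^2\|x_j^k-x_i^k\|^2/\Gamma_{\{i,j\}}$; this is exactly the $1/(2\Gamma_{\{i,j\}})$ part of the bracket in \eqref{eq:Vdrop}.

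The remaining positive contributions are the Bregman term $f_i(x_i^k)-f_i(x_i^{k+1})+\nabla f_i(x_i^{k+1})^T\Delta x_i$ and the cross term $-\tilde r_i^T(x^*-x_i^k)$. Using the identity that the Bregman term equals $(\nabla f_i(x_i^{k+1})-\nabla f_i(x_i^k))^T\Delta x_i-(f_i(x_i^{k+1})-f_i(x_i^k)-\nabla f_i(x_i^k)^T\Delta x_i)$, together with the upper smoothness bound $\Delta x_i^T\nabla^2 f_i(x_i^k)\Delta x_i\le\Theta_i\|\Delta x_i\|^2$ and the Taylor lower bound $f_i(x_i^{k+1})-f_i(x_i^k)-\nabla f_i(x_i^k)^T\Delta x_i\ge(\theta_i/2)\|\Delta x_i\|^2$ (modulo cubic Hessian-Lipschitz remainders absorbed into the linear envelope via $\|\Delta x_i\|\le\delta_i$), I recover the $(\Theta_i-\theta_i/2)\|\Delta x_i\|^2$ piece of the bracket; pairing $\tilde r_i$ with $\|x^*-x_i^k\|\le\sqrt{2V(\mathbf{x}^0)/\theta_i}$ supplies the $(L_i/2)\sqrt{2V(\mathbf{x}^0)/\theta_i}\|\Delta x_i\|^2$ piece. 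Applying $\|\Delta x_i\|^2\le(|\mathcal{N}_i|/\theta_i^2)\sum_{j\in\mathcal{N}_i}\alpha_{\{i,j\}}^2\|\nabla g_{\{i,j\}}(x_j^k)-\nabla g_{\{i,j\}}(x_i^k)\|^2$ recasts these into the edge-indexed form needed for \eqref{eq:Vdrop}, after which the step-size condition \eqref{eq:stepsizedrop} forces the resulting bracket to be non-negative, so that $V(\mathbf{x}^{k+1})\le V(\mathbf{x}^k)\le V(\mathbf{x}^0)$ and the induction closes. The hardest part is the meticulous bookkeeping of the two Hessian-Taylor remainders and the two scales of constants -- the looser primed ones on $\mathcal{C}'_i$ that drive the step-size ceiling in \eqref{eq:stepsizedrop}, versus the tighter unprimed ones on $\operatorname{conv}\mathcal{C}_i$ appearing in \eqref{eq:Vdrop} -- and, in particular, arranging the decomposition so that the $-\theta_i/2$ term materializes as a cancellation from the strong-convexity Taylor lower bound rather than being discarded.
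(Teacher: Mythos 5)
Your overall architecture is the same as the paper's: the same induction on $V(\mathbf{x}^k)\le V(\mathbf{x}^0)$, the same one-step bound $\|x_i^{k+1}-x_i^k\|\le\delta_i$ placing $x_i^{k+1}$ in $\mathcal{C}'_i$, an algebraically identical decomposition of $\Delta V$ (the paper merely splits your Bregman bracket into the divergence at $x_i^k$ plus the gradient-difference term), the same antisymmetry cancellation of the $x^*$ part, the same remainder bound $\|\tilde r_i\|\le (L'_i/2)\|\Delta x_i\|^2$, the same use of $\|\Delta x_i\|^2\le(|\mathcal{N}_i|/\theta_i^2)\sum_{j}\alpha_{\{i,j\}}^2\|\phi_{ij}^k\|^2$, and the same two-pass handling of primed versus unprimed constants.

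There is, however, one step that fails as you have ordered it. You convert the negative edge contribution to $-\tfrac12\sum_i\sum_{j\in\mathcal{N}_i}\alpha_{\{i,j\}}\gamma_{\{i,j\}}^2\|x_j^k-x_i^k\|^2/\Gamma_{\{i,j\}}$ \emph{first}, while your positive contributions, after applying the $\|\Delta x_i\|^2$ bound, are measured in $\|\phi_{ij}^k\|^2$ with $\phi_{ij}^k=\nabla g_{\{i,j\}}(x_j^k)-\nabla g_{\{i,j\}}(x_i^k)$. These two families of terms cannot then be factored into the common prefactor $\gamma_{\{i,j\}}^2\|x_j^k-x_i^k\|^2$ appearing in \eqref{eq:Vdrop}: upper-bounding the positive $\|\phi_{ij}^k\|^2$ terms by multiples of $\gamma_{\{i,j\}}^2\|x_j^k-x_i^k\|^2$ would require $\|\phi_{ij}^k\|\le\gamma_{\{i,j\}}\|x_j^k-x_i^k\|$, which is the wrong direction (only $\|\phi_{ij}^k\|\le\Gamma_{\{i,j\}}\|x_j^k-x_i^k\|$ holds, and using that instead degrades the bracket by a factor $\Gamma_{\{i,j\}}^2/\gamma_{\{i,j\}}^2$ and would demand a strictly stronger step-size condition than \eqref{eq:stepsizedrop}). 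The fix is to delay the conversion: keep the negative term in the form $-\sum\alpha_{\{i,j\}}\|\phi_{ij}^k\|^2/(2\Gamma_{\{i,j\}})$, obtained from $(x_i^k-x_j^k)^T\phi_{ij}^k\le-\|\phi_{ij}^k\|^2/\Gamma_{\{i,j\}}$ (which follows since $\phi_{ij}^k=A_\phi(x_j^k-x_i^k)$ with $A_\phi=\int_0^1\nabla^2 g_{\{i,j\}}(x_i^k+s(x_j^k-x_i^k))ds\succ0$ and $\Gamma_{\{i,j\}}A_\phi\succeq A_\phi^2$), so that the entire bound reads $-\sum_i\sum_{j\in\mathcal{N}_i}\|\phi_{ij}^k\|^2\,\alpha_{\{i,j\}}\,[\,\cdot\,]$ with a single bracket; the step-size condition \eqref{eq:stepsizedrop} then certifies the bracket is positive, and only at that point does the lower bound $\|\phi_{ij}^k\|^2\ge\gamma_{\{i,j\}}^2\|x_j^k-x_i^k\|^2$ legitimately yield \eqref{eq:Vdrop}. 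Relatedly, you should bound $(\nabla f_i(x_i^{k+1})-\nabla f_i(x_i^k))^T\Delta x_i\le\Theta'_i\|\Delta x_i\|^2$ directly from Lipschitz continuity of $\nabla f_i$ on $\mathcal{C}'_i$ rather than via a Hessian-plus-remainder expansion; the latter introduces a cubic term in $\|\Delta x_i\|$ that, if absorbed through $\|\Delta x_i\|\le\delta_i$, produces a bracket different from the one stated in \eqref{eq:Vdrop}.
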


\begin{proof}
	See Appendix~\ref{sec:proofofstepsize}.
\end{proof}

From Lemma~\ref{lem:monoto}, $V$ is a Lyapunov function which keeps strictly decreasing until $x_i^k$ $\forall i\in\mathcal{V}$ become identical. Also, since $V(\mathbf{x}^k)$ is bounded from below, $\lim_{k\rightarrow\infty}V(\mathbf{x}^k)$ exists (but is possibly nonzero). 

\begin{remark}
In Lemma~\ref{lem:monoto} as well as the statements in the rest of the paper, the constant $\bar{\alpha}$ in the condition $0<\alpha_{\{i,j\}}\le\bar{\alpha}$ $\forall\{i,j\}\in\mathcal{E}$ is an arbitrarily given positive scalar, which determines $\delta_i$ and thus the compact set $\mathcal{C}'_i$. Specifically, the larger $\bar{\alpha}$ is, the larger $\mathcal{C}'_i$ is. Accordingly, $\delta_i$ also affects the values of $\theta'_i$, $\Theta'_i$, and $L'_i$.
\end{remark}

Lemma~\ref{lem:monoto} leads to the next result, which says that the entire network of nodes eventually reach a consensus.

\begin{theorem}[Asymptotic convergence to consensus]\label{theo:firsttheo} 
Suppose Assumptions~\ref{f},~\ref{gijl}, and~\ref{asm:coercive} hold. Let $\mathbf{x}^k=[x_1^k;\ldots;x_N^k]$ $\forall k\ge0$ be generated by DEAN described in Algorithm~\ref{alg:DEAN} with $0<\alpha_{\{i,j\}}\le\bar{\alpha}$ $\forall\{i,j\}\in\mathcal{E}$. Suppose \eqref{eq:stepsizedrop} holds. Then,
\begin{align}
\lim_{k\rightarrow \infty} \|x_i^k-x_j^k\| = 0,\quad\forall i,j\in \mathcal{V}.\label{eq:consensuserrorbound}
\end{align} 
\end{theorem}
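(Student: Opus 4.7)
The plan is to lean directly on Lemma~\ref{lem:monoto} as the workhorse. Since $V(\mathbf{x}) \ge 0$ for all $\mathbf{x} \in \mathbb{R}^{nN}$ by Assumption~\ref{f}, and Lemma~\ref{lem:monoto} guarantees $V(\mathbf{x}^{k+1}) \le V(\mathbf{x}^k)$ under the stated step-size condition, the real-valued sequence $\{V(\mathbf{x}^k)\}$ is monotonically non-increasing and bounded below, hence convergent. Telescoping the drop inequality \eqref{eq:Vdrop} from $k=0$ to $k=K-1$ and using $V(\mathbf{x}^K) \ge 0$, I would obtain
\begin{align*}
\sum_{k=0}^{K-1} \sum_{i \in \mathcal{V}} \sum_{j \in \mathcal{N}_i} \alpha_{\{i,j\}} \gamma_{\{i,j\}}^2 \|x_j^k - x_i^k\|^2 \left[\left(\frac{\theta_i}{2} - \Theta_i - \frac{L_i}{2}\sqrt{\frac{2V(\mathbf{x}^0)}{\theta_i}}\right)\frac{|\mathcal{N}_i|}{\theta_i^2}\alpha_{\{i,j\}} + \frac{1}{2\Gamma_{\{i,j\}}}\right] \le V(\mathbf{x}^0),
\end{align*}
and letting $K \to \infty$ would give a finite sum.

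The one substantive check is that the bracketed coefficient on each edge is strictly positive and independent of $k$, so that each edge-term is a positive constant times $\|x_j^k - x_i^k\|^2$. I would verify this by rearranging it as $\frac{1}{2\Gamma_{\{i,j\}}} - \alpha_{\{i,j\}} \frac{|\mathcal{N}_i|}{\theta_i^2}(\Theta_i - \theta_i/2 + \frac{L_i}{2}\sqrt{2V(\mathbf{x}^0)/\theta_i})$ and invoking the inclusion $\mathcal{C}'_i \supseteq \operatorname{conv}\mathcal{C}_i$, which yields $\Theta_i \le \Theta'_i$, $\theta_i \ge \theta'_i$, and $L_i \le L'_i$. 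These inequalities show that the step-size bound \eqref{eq:stepsizedrop} is at least as strong as what is needed to force the bracket to exceed zero. Consequently, there exists a constant $C_{ij} > 0$, depending only on the problem data and $\bar\alpha$ (not on $k$), such that $\sum_{k=0}^\infty C_{ij} \|x_i^k - x_j^k\|^2 < \infty$ for every edge $\{i,j\} \in \mathcal{E}$.

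Finiteness of a series of nonnegative terms forces the terms to vanish, so $\|x_i^k - x_j^k\| \to 0$ for each $\{i,j\} \in \mathcal{E}$. To lift this from neighbors to arbitrary pairs, I would appeal to the connectedness of $\mathcal{G}$: for any $i,j \in \mathcal{V}$ there is a path $i = v_0, v_1, \ldots, v_m = j$ with $\{v_s, v_{s+1}\} \in \mathcal{E}$, and the triangle inequality $\|x_i^k - x_j^k\| \le \sum_{s=0}^{m-1} \|x_{v_s}^k - x_{v_{s+1}}^k\|$ propagates the edgewise convergence to a global consensus \eqref{eq:consensuserrorbound}.

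Overall this proof is short and almost mechanical once Lemma~\ref{lem:monoto} is in hand; the only place that requires care is the positivity of the edge coefficient, which is where the distinction between the constants on $\operatorname{conv}\mathcal{C}_i$ and those on the enlarged set $\mathcal{C}'_i$ is essential. I do not anticipate needing any additional machinery beyond telescoping, path connectedness, and the triangle inequality.
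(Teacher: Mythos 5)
Your proposal is correct and follows essentially the same route as the paper's proof: use Lemma~\ref{lem:monoto} to get monotone convergence of $V(\mathbf{x}^k)$, telescope \eqref{eq:Vdrop} to conclude summability of the squared edgewise disagreements (the strict positivity of the bracketed coefficient being exactly what \eqref{eq:stepsizedrop} together with $\Theta_i\le\Theta'_i$, $\theta_i\ge\theta'_i$, $L_i\le L'_i$ guarantees), and then pass from edges to arbitrary node pairs via connectedness. The only difference is that you spell out the positivity check and the path/triangle-inequality step, which the paper leaves implicit.
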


\begin{proof}
See Appendix~\ref{sec:proofoftheo:firsttheo}.
\end{proof}

In the following theorem, we show the nodes are able to reach $\epsilon$-accuracy in optimality for any given $\epsilon>0$, provided that the step-sizes $\alpha_{\{i,j\}}$ $\forall\{i,j\}\in\mathcal{E}$ are properly related to $\epsilon$.

\begin{theorem}[Arbitrary closeness to exact optimality]\label{thm:convepsilon}	
Suppose Assumptions~\ref{f},~\ref{gijl}, and~\ref{asm:coercive} hold. Let $\mathbf{x}^k\!\!=\![x_1^k;\ldots;x_N^k]$ $\!\forall k\!\ge\!0$ be generated by DEAN described in Algorithm~\ref{alg:DEAN} with $0<\alpha_{\{i,j\}}\le\bar{\alpha}$ $\forall\{i,j\}\in\mathcal{E}$. For each $i\in\mathcal{V}$, let $\eta_i=|\mathcal{N}_i|L_i\sqrt{N}V(\mathbf{x}^0)/(\theta_i^2\sum_{\ell\in \mathcal{V}} \bar{\theta}_\ell)\ge0$ and $\tilde{\eta}_i = 2|\mathcal{N}_i|\left(\Theta'_i-\frac{\theta'_i}{2}+\frac{L'_i}{2}\sqrt{\frac{2V(\mathbf{x}^0)}{\theta_i}}\right)/\theta_i^2>0$. Given $\epsilon>0$, if
\begin{align}
\!\!\!\alpha_{\{i,j\}}<\frac{\epsilon}{\Gamma_{\{i,j\}}}\min\Bigl\{\frac{1}{\eta_i+\tilde{\eta}_i\epsilon},\frac{1}{\eta_j+\tilde{\eta}_j\epsilon}\Bigr\},\;\forall\{i,j\}\in\mathcal{E},\label{eq:stepsizeepsilon}
	\end{align}
then $\lim_{k\rightarrow\infty}\|\mathbf{x}^k-\mathbf{x}^*\|<\epsilon$.
\end{theorem}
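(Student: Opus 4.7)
The plan is to bound the distance from any limit point of $\{\mathbf{x}^k\}$ to $\mathbf{x}^*$ by chaining three ingredients: the Lyapunov descent of Lemma~\ref{lem:monoto}, which controls the total consensus error; the gradient-sum drift identity seeded by \eqref{eq:gradientsumchange}, which measures how far $\sum_i\nabla f_i(x_i^k)$ can move from its initial value of zero; and the global strong convexity of $\sum_i f_i$ on $\operatorname{conv}\cup_j\mathcal{C}_j$ with parameter $\sum_\ell\bar\theta_\ell$, which converts a gradient-sum bound into a distance bound.

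First, I would verify that \eqref{eq:stepsizeepsilon} implies \eqref{eq:stepsizedrop}. Indeed, \eqref{eq:stepsizeepsilon} forces $\alpha_{\{i,j\}}\Gamma_{\{i,j\}}\tilde\eta_i<1$, which upon unfolding the definition of $\tilde\eta_i$ coincides with \eqref{eq:stepsizedrop}. Hence Lemma~\ref{lem:monoto} and Theorem~\ref{theo:firsttheo} apply: $\{\mathbf{x}^k\}\subset\prod_i\mathcal{C}_i$ is bounded, $V(\mathbf{x}^k)$ is nonincreasing, and $\|x_i^k-x_j^k\|\to 0$. Telescoping \eqref{eq:Vdrop} over all $k$ and using $V\ge0$ yields a summability bound of the form $\sum_{k=0}^\infty\sum_{\{i,j\}\in\mathcal{E}}\alpha_{\{i,j\}}\|x_j^k-x_i^k\|^2\le C\,V(\mathbf{x}^0)$, where the constant $C$ depends on $\Gamma_{\{i,j\}}$, $\gamma_{\{i,j\}}$ and on $1/(1-\alpha_{\{i,j\}}\Gamma_{\{i,j\}}\tilde\eta_i)$.

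Next, I would bound the gradient-sum drift. The identity $\sum_i\nabla^2 f_i(x_i^k)(x_i^{k+1}-x_i^k)=0$ shown just before \eqref{eq:gradientsumchange}, combined with local Lipschitz continuity of $\nabla^2 f_i$ on $\operatorname{conv}\mathcal{C}_i$, yields
\begin{align*}
\Bigl\|\sum_{i\in\mathcal{V}}\nabla f_i(x_i^{k+1})-\sum_{i\in\mathcal{V}}\nabla f_i(x_i^k)\Bigr\|\le\sum_{i\in\mathcal{V}}\tfrac{L_i}{2}\|x_i^{k+1}-x_i^k\|^2.
\end{align*}
Telescoping over $k$, using $\sum_i\nabla f_i(x_i^0)=0$ from \eqref{eq:xi0}, and invoking the per-step estimate $\|x_i^{k+1}-x_i^k\|^2\le(|\mathcal{N}_i|/\theta_i^2)\sum_{j\in\mathcal{N}_i}\alpha_{\{i,j\}}^2\Gamma_{\{i,j\}}^2\|x_j^k-x_i^k\|^2$ (from \eqref{scheme}, $\nabla^2 f_i\succeq\theta_i I_n$, Lipschitz continuity of each $\nabla g_{\{i,j\}}$ with constant $\Gamma_{\{i,j\}}$, and Cauchy--Schwarz) together with the above summability bound, I would derive a uniform estimate
\begin{align*}
\Bigl\|\sum_{i\in\mathcal{V}}\nabla f_i(x_i^k)\Bigr\|\le\frac{\sum_\ell\bar\theta_\ell}{\sqrt N}\max_{\{i,j\}\in\mathcal{E}}\alpha_{\{i,j\}}\Gamma_{\{i,j\}}\eta_i,\qquad\forall k\ge0,
\end{align*}
where the definition of $\eta_i$ (involving $L_i$, $|\mathcal{N}_i|$, $\sqrt N$, $V(\mathbf{x}^0)$, $\theta_i^2$, and $\sum_\ell\bar\theta_\ell$) absorbs all the accumulated constants.

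Finally, every limit point of $\{\mathbf{x}^k\}$ is of the form $\bar{\mathbf{x}}=[\bar x;\dots;\bar x]$ with $\bar x\in\bigcap_j\mathcal{C}_j\subseteq\operatorname{conv}\cup_j\mathcal{C}_j$, by Theorem~\ref{theo:firsttheo} and closedness of the $\mathcal{C}_j$. Since $\sum_i\nabla f_i(x^*)=0$, strong convexity of $\sum_i f_i$ on $\operatorname{conv}\cup_j\mathcal{C}_j$ gives $\|\sum_i\nabla f_i(\bar x)\|\ge(\sum_\ell\bar\theta_\ell)\|\bar x-x^*\|$. Hence
\begin{align*}
\|\bar{\mathbf{x}}-\mathbf{x}^*\|=\sqrt N\|\bar x-x^*\|\le\frac{\sqrt N}{\sum_\ell\bar\theta_\ell}\Bigl\|\sum_{i\in\mathcal{V}}\nabla f_i(\bar x)\Bigr\|\le\max_{\{i,j\}\in\mathcal{E}}\alpha_{\{i,j\}}\Gamma_{\{i,j\}}\eta_i<\epsilon,
\end{align*}
where the last strict inequality follows from \eqref{eq:stepsizeepsilon} (since $\eta_i<\eta_i+\tilde\eta_i\epsilon$). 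As this holds for every limit point and the set of limit points is compact, $\limsup_{k\to\infty}\|\mathbf{x}^k-\mathbf{x}^*\|<\epsilon$, which delivers the claim.

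The main obstacle will be the precise bookkeeping in the second step: the nested sum $\sum_k\sum_i L_i\|x_i^{k+1}-x_i^k\|^2$ must be collapsed via Cauchy--Schwarz and the Lyapunov summability so that its final dependence on the step-sizes and problem parameters lines up with $\Gamma_{\{i,j\}}\eta_i$ up to the absorbed constants. The splitting of the hypothesis into $\eta_i$ (drift contribution, carrying $L_i$, $|\mathcal{N}_i|$, $\sqrt N$, $1/\sum_\ell\bar\theta_\ell$) and $\tilde\eta_i\epsilon$ (Lyapunov-descent denominator correction, carrying the second-order terms $\Theta'_i-\theta'_i/2+\tfrac{L'_i}{2}\sqrt{2V(\mathbf{x}^0)/\theta_i}$) strongly suggests that the summability constant $C$ degrades like $1/(1-\alpha_{\{i,j\}}\Gamma_{\{i,j\}}\tilde\eta_i)$, and aligning these two effects tightly is the delicate part of the proof.
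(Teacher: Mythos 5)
Your architecture is the paper's: check that \eqref{eq:stepsizeepsilon} implies \eqref{eq:stepsizedrop} (the paper makes exactly your observation that the right-hand side of \eqref{eq:stepsizedrop} equals $\frac{1}{\Gamma_{\{i,j\}}}\min\{1/\tilde{\eta}_i,1/\tilde{\eta}_j\}$), split $\|\mathbf{x}^k-\mathbf{x}^*\|$ into a consensus error that vanishes by Theorem~\ref{theo:firsttheo} plus a term controlled by $\|\sum_i\nabla f_i(x_i^k)\|$ via strong convexity with parameter $\sum_\ell\bar{\theta}_\ell$, and bound the gradient-sum drift by $\sum_k\sum_i\frac{L_i}{2}\|x_i^{k+1}-x_i^k\|^2$, which is made summable by the Lyapunov descent. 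That is all correct and is the intended proof.

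The one step in your chain that does not deliver the stated constants is the summability argument. You telescope \eqref{eq:Vdrop}, whose right-hand side is expressed in terms of $\gamma_{\{i,j\}}^2\|x_j^k-x_i^k\|^2$, and then re-bound $\|x_i^{k+1}-x_i^k\|^2$ by $\frac{|\mathcal{N}_i|}{\theta_i^2}\sum_j\alpha_{\{i,j\}}^2\Gamma_{\{i,j\}}^2\|x_j^k-x_i^k\|^2$. Passing from $\|\phi_{ij}^k\|^2$ down to $\gamma_{\{i,j\}}^2\|x_j^k-x_i^k\|^2$ and back up to $\Gamma_{\{i,j\}}^2\|x_j^k-x_i^k\|^2$ costs a factor $\Gamma_{\{i,j\}}^2/\gamma_{\{i,j\}}^2\ge1$, which for general $g_{\{i,j\}}$ under Assumption~\ref{gijl} is not $1$; your claimed uniform estimate $\|\sum_i\nabla f_i(x_i^k)\|\le\frac{\sum_\ell\bar{\theta}_\ell}{\sqrt{N}}\max\alpha_{\{i,j\}}\Gamma_{\{i,j\}}\eta_i$ therefore does not follow from the chain as sketched, and with the lost factor the hypothesis \eqref{eq:stepsizeepsilon} is no longer strong enough to push the limit below $\epsilon$. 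The paper avoids this by staying in the $\|\phi_{ij}^k\|^2$ variables: combining $\|x_i^{k+1}-x_i^k\|^2\le\frac{|\mathcal{N}_i|}{\theta_i^2}\sum_j\alpha_{\{i,j\}}^2\|\phi_{ij}^k\|^2$ directly with the descent inequality \eqref{eq:deltaVnewtighter} gives $V(\mathbf{x}^{k+1})-V(\mathbf{x}^k)\le-\sum_i\rho_i\|x_i^{k+1}-x_i^k\|^2$ with $\rho_i=\frac{\theta_i^2}{2|\mathcal{N}_i|\max_j\alpha_{\{i,j\}}\Gamma_{\{i,j\}}}+\frac{\theta_i}{2}-\Theta_i-\frac{L_i}{2}\sqrt{2V(\mathbf{x}^0)/\theta_i}$, whence $\|\sum_i\nabla f_i(x_i^k)\|\le\max_i\frac{L_i}{2\rho_i}V(\mathbf{x}^0)$ with only a single power of $\Gamma_{\{i,j\}}$ and no $\gamma_{\{i,j\}}$. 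Your closing remark correctly anticipates the other half of the bookkeeping: the $1/(1-\alpha_{\{i,j\}}\Gamma_{\{i,j\}}\tilde{\eta}_i)$ degradation hidden in $\rho_i$ is exactly cancelled by rearranging $\alpha_{\{i,j\}}\Gamma_{\{i,j\}}(\eta_i+\tilde{\eta}_i\epsilon)<\epsilon$ into $\alpha_{\{i,j\}}\Gamma_{\{i,j\}}\eta_i/(1-\alpha_{\{i,j\}}\Gamma_{\{i,j\}}\tilde{\eta}_i)<\epsilon$, not by the weaker observation $\eta_i<\eta_i+\tilde{\eta}_i\epsilon$ that your displayed final step invokes. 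With the summability rerouted through $\|\phi_{ij}^k\|^2$ and that rearrangement made explicit, your proof closes.
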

	
\begin{proof}
See Appendix~\ref{sec:proofofthm:convergence}.
\end{proof}

Theorem~\ref{thm:convepsilon}	 says that to achieve a suboptimal solution of given accuracy $\epsilon>0$, the local step-sizes $\alpha_{\{i,j\}}$'s should depend on $\epsilon$ as in \eqref{eq:stepsizeepsilon}. Observe that the $\alpha_{\{i,j\}}$'s may be very small if $\epsilon$ is tiny, which might result in slow convergence. There appears to be an inevitable trade-off between convergence speed and accuracy, and this is indeed very common for optimization methods. Moreover, the step-size condition is for the purpose of establishing theoretical results, and larger step-sizes that are empirically selected and may violate the theoretical condition are often adopted in practice.

Subsequently, we investigate the convergence rate of DEAN. For simplicity, here we only consider DEAN with $g_{\{i,j\}}(x)=\frac{1}{2}x^Tx$ $\forall \{i,j\}\in\mathcal{E}$. In fact, we can always allow more general $g_{\{i,j\}}$'s under Assumption~\ref{gijl}, which would lead to more complicated but same-order convergence rates. We show in the theorem below that the nodes achieve a consensus with an explicit error bound at a \emph{linear rate} that depends on $\lambda_{\max}(L_{\mathcal{G}})$ and $\lambda_{2}(L_{\mathcal{G}})$. Here, $L_{\mathcal{G}}$ is the Laplacian matrix of the graph $\mathcal{G}$, which is symmetric positive semidefinite and equal to $\hat{H}_\mathcal{G}$ in \eqref{matrix:H_G} with $h_{\{i,j\}}=1$ $\forall\{i,j\}\in\mathcal{E}$. Since $\mathcal{G}$ is connected, $\lambda_{2}(L_{\mathcal{G}})$, which is called the algebraic connectivity of $\mathcal{G}$, is positive. Also, $0<\lambda_{\max}(L_{\mathcal{G}})\le\min\{N,2\max_{i\in\mathcal{V}}|\mathcal{N}_i|\}$. 

\begin{theorem}[Linear convergence rate]\label{thm:rate}
Suppose Assumptions~\ref{f} and~\ref{asm:coercive} hold. Let $\mathbf{x}^k\!=\![x_1^k;\ldots;x_N^k]$ $\forall k\ge0$ be generated by DEAN described in Algorithm~\ref{alg:DEAN} with $0<\alpha_{\{i,j\}}\le\bar{\alpha}$ and $g_{\{i,j\}}(x)=\frac{1}{2}x^Tx$ $\forall\{i,j\}\in\mathcal{E}$. Suppose \eqref{eq:stepsizedrop} holds and $\alpha_{\{i,j\}}<\theta/(\max_{i\in\mathcal{V}}|\mathcal{N}_i|)$ $\forall\{i,j\}\in\mathcal{E}$. Then, there exists $\tilde{\mathbf{x}}=[\tilde{x};\ldots;\tilde{x}]\in\mathbb{R}^{nN}$, $\tilde{x}\in\mathbb{R}^n$ such that
\begin{align}
\|\mathbf{x}^k-\tilde{\mathbf{x}}\|\leq\dfrac{\max_{\{i,j\}\in \mathcal{E}}\alpha_{\{i,j\}}\lambda_{\max}(L_{\mathcal{G}})}{\theta(1-q)}\|\mathbf{x}^0\|q^k,\label{eq:xx<=Cqk}
\end{align}
where $q=\max\big\{\max_{\{i,j\}\in\mathcal{E}}\alpha_{\{i,j\}}\lambda_{\max}(L_\mathcal{G})/\theta-1, 1-\min_{\{i,j\}\in\mathcal{E}}\alpha_{\{i,j\}}\lambda_2(L_{\mathcal{G}})/\Theta\big\}\in(0,1)$. In addition,
\begin{align}
&\|\tilde{\mathbf{x}}-\mathbf{x}^*\|\leq\max_{i\in\mathcal{V}}\dfrac{L_i}{\tilde{\rho}_i}\cdot\dfrac{\sqrt{N}V(\mathbf{x}^0)}{2\sum_{i\in\mathcal{V}}\bar{\theta}_i},\label{eq:optimalityerror}
\end{align}
where $\tilde{\rho}_i=\frac{\theta_i^2}{2|\mathcal{N}_i|\!\max_{j\in\mathcal{N}_i}\alpha_{\{i,j\}}}+\frac{\theta_i}{2}-\Theta_i-\frac{L_i}{2}\sqrt{\frac{2V(\mathbf{x}^0)}{\theta_i}}>0$ $\forall i\in\mathcal{V}$.
\end{theorem}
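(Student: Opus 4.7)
The plan is to prove the theorem in two parts: first, the geometric estimate~\eqref{eq:xx<=Cqk} (which simultaneously identifies the limit $\tilde{\mathbf{x}}$ and controls its rate of approach); second, the steady-state error~\eqref{eq:optimalityerror}. Throughout, using $\nabla g_{\{i,j\}}(x)=x$, it is convenient to rewrite the DEAN update as $\mathbf{x}^{k+1}=\mathbf{x}^k-(\mathbf{H}^k)^{-1}\mathbf{A}\mathbf{x}^k$, where $\mathbf{H}^k:=\operatorname{diag}(\nabla^2 f_1(x_1^k),\ldots,\nabla^2 f_N(x_N^k))$ and $\mathbf{A}$ is the block weighted Laplacian $\hat{H}_\mathcal{G}\otimes I_n$ of \eqref{matrix:H_G} with weights $h_{\{i,j\}}=\alpha_{\{i,j\}}$. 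By construction $\mathbf{A}$ is symmetric PSD with kernel equal to the consensus subspace, $\min_{\{i,j\}}\alpha_{\{i,j\}}\lambda_2(L_{\mathcal{G}})\le\lambda_2(\mathbf{A})\le\lambda_{\max}(\mathbf{A})\le\max_{\{i,j\}}\alpha_{\{i,j\}}\lambda_{\max}(L_{\mathcal{G}})$, and Lemma~\ref{lem:monoto} keeps every iterate in $\mathcal{C}_i$, so that $\theta I_n\preceq\mathbf{H}^k\preceq\Theta I_n$.

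For the first part I introduce the ``Laplacian residual'' $\mathbf{p}^k:=\mathbf{A}\mathbf{x}^k$, which always lies in $\operatorname{range}(\mathbf{A})$ (the consensus-perpendicular subspace). A direct calculation gives the recursion $\mathbf{p}^{k+1}=(\mathbf{I}-\mathbf{A}(\mathbf{H}^k)^{-1})\mathbf{p}^k$. Left-multiplying by $\mathbf{A}^{-1/2}$ (the inverse of the restriction of $\mathbf{A}^{1/2}$ to consensus-perp) symmetrizes this to $\tilde{\mathbf{p}}^{k+1}=(\mathbf{I}-\mathbf{M}^k)\tilde{\mathbf{p}}^k$, with $\tilde{\mathbf{p}}^k:=\mathbf{A}^{-1/2}\mathbf{p}^k$ and $\mathbf{M}^k:=\mathbf{A}^{1/2}(\mathbf{H}^k)^{-1}\mathbf{A}^{1/2}$. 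A Rayleigh-quotient argument places the spectrum of $\mathbf{M}^k$ on consensus-perp inside $[\lambda_2(\mathbf{A})/\Theta,\lambda_{\max}(\mathbf{A})/\theta]$; since the assumption $\alpha_{\{i,j\}}<\theta/\max_i|\mathcal{N}_i|$ combined with $\lambda_{\max}(L_{\mathcal{G}})\le 2\max_i|\mathcal{N}_i|$ forces this interval into $(0,2)$, the operator norm of $\mathbf{I}-\mathbf{M}^k$ on consensus-perp is bounded by the $q\in(0,1)$ stated in the theorem. Chaining $\|\tilde{\mathbf{p}}^{k+1}\|\le q\|\tilde{\mathbf{p}}^k\|$ from the estimate $\|\tilde{\mathbf{p}}^0\|^2=(\mathbf{x}^0)^T\mathbf{A}\mathbf{x}^0\le\lambda_{\max}(\mathbf{A})\|\mathbf{x}^0\|^2$ yields $\|\mathbf{p}^k\|\le\sqrt{\lambda_{\max}(\mathbf{A})}\,\|\tilde{\mathbf{p}}^k\|\le\lambda_{\max}(\mathbf{A})\|\mathbf{x}^0\|q^k$, hence $\|\mathbf{x}^{k+1}-\mathbf{x}^k\|=\|(\mathbf{H}^k)^{-1}\mathbf{p}^k\|\le\max_{\{i,j\}}\alpha_{\{i,j\}}\lambda_{\max}(L_{\mathcal{G}})\|\mathbf{x}^0\|q^k/\theta$. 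Summing this geometric series identifies a Cauchy limit $\tilde{\mathbf{x}}$ and yields~\eqref{eq:xx<=Cqk}; since $\|\mathbf{p}^k\|\to 0$ forces $\mathbf{A}\tilde{\mathbf{x}}=0$, the limit must be a consensus vector.

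For~\eqref{eq:optimalityerror} I start from the identity~\eqref{eq:gradientsumchange}, which combined with $\|\nabla^2 f_i(x)-\nabla^2 f_i(y)\|\le L_i\|x-y\|$ on $\mathcal{C}_i$ gives $\|\sum_i\nabla f_i(x_i^{k+1})-\sum_i\nabla f_i(x_i^k)\|\le\frac{1}{2}\sum_i L_i\|x_i^{k+1}-x_i^k\|^2$. Since $\sum_i\nabla f_i(x_i^0)=0$ by initialization, telescoping and passing to the limit gives $\|\sum_i\nabla f_i(\tilde{x})\|\le\frac{1}{2}\sum_{k\ge 0}\sum_i L_i\|x_i^{k+1}-x_i^k\|^2$. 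The update rule and Cauchy--Schwarz give $\|x_i^{k+1}-x_i^k\|^2\le\frac{|\mathcal{N}_i|}{\theta_i^2}\sum_{j\in\mathcal{N}_i}\alpha_{\{i,j\}}^2\|x_j^k-x_i^k\|^2$, while rearranging the drop in Lemma~\ref{lem:monoto} (with $\Gamma_{\{i,j\}}=\gamma_{\{i,j\}}=1$) shows $V(\mathbf{x}^k)-V(\mathbf{x}^{k+1})\ge\sum_{i}\sum_{j\in\mathcal{N}_i}\frac{|\mathcal{N}_i|\alpha_{\{i,j\}}^2\tilde{\rho}_i}{\theta_i^2}\|x_j^k-x_i^k\|^2$, with $\tilde{\rho}_i>0$ ensured by~\eqref{eq:stepsizedrop}. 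Combining these two inequalities and telescoping $V$ yields $\sum_{k\ge 0}\sum_i\frac{L_i}{2}\|x_i^{k+1}-x_i^k\|^2\le\max_i\frac{L_i}{2\tilde{\rho}_i}V(\mathbf{x}^0)$. Finally, since $\sum_i\nabla f_i(x^*)=0$ and both $x^*,\tilde{x}\in\operatorname{conv}\cup_j\mathcal{C}_j$ (where each $\nabla^2 f_i\succeq\bar{\theta}_i I_n$), the mean-value theorem gives $\|\sum_i\nabla f_i(\tilde{x})\|\ge(\sum_i\bar{\theta}_i)\|\tilde{x}-x^*\|$; combining this with $\|\tilde{\mathbf{x}}-\mathbf{x}^*\|=\sqrt{N}\|\tilde{x}-x^*\|$ produces~\eqref{eq:optimalityerror}.

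The most delicate point is the contraction: computing $\|\mathbf{p}^{k+1}\|/\|\mathbf{p}^k\|$ directly in Euclidean norm picks up an extraneous factor $\sqrt{\kappa(\mathbf{A})}$, which would violate the clean constant in~\eqref{eq:xx<=Cqk}. The remedy is to carry the contraction throughout in $\tilde{\mathbf{p}}^k=\mathbf{A}^{-1/2}\mathbf{p}^k$ (equivalently, in the $\mathbf{A}^{\dagger}$-norm) and translate back to the Euclidean norm only at the very last step, when bounding $\|\mathbf{x}^{k+1}-\mathbf{x}^k\|$, at which point the two occurrences of $\sqrt{\lambda_{\max}(\mathbf{A})}$ multiply to give exactly $\lambda_{\max}(\mathbf{A})$. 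A secondary but essential ingredient is Lemma~\ref{lem:monoto}, which confines all iterates to $\mathcal{C}_i$ and thereby legitimizes both the global upper bound $\Theta$ on $\nabla^2 f_i(x_i^k)$ and the Lipschitz constant $L_i$ needed in~\eqref{eq:gradientsumchange}.
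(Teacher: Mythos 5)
Your proposal is correct and follows essentially the same route as the paper: your symmetrized residual $\tilde{\mathbf{p}}^k=\mathbf{A}^{-1/2}\mathbf{A}\mathbf{x}^k$ coincides with the paper's $\mathbf{y}^k=(H_{\mathcal{G}}^{1/2}\otimes I_n)\mathbf{x}^k$, the contraction factor $q$ is obtained from the same Rayleigh-quotient bounds on $\mathbf{A}^{1/2}(\nabla^2F(\mathbf{x}^k))^{-1}\mathbf{A}^{1/2}$ restricted to the consensus-orthogonal subspace, and the error bound \eqref{eq:optimalityerror} is derived by the same telescoping of the gradient sum against the Lyapunov drop followed by the $\sum_i\bar{\theta}_i$ strong-convexity step. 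No substantive differences.
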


\begin{proof}
See Appendix~\ref{sec:proofofthm:consensuserror}.
\end{proof}

The linear rate $q$ of convergence in Theorem~\ref{thm:rate} depends on the curvatures of the objective functions characterized by $\theta$ and $\Theta$, the network connectivity characterized by $\lambda_2(L_{\mathcal{G}})$ and $\lambda_{\max}(L_\mathcal{G})$, and the algorithm parameters $\alpha_{\{i,j\}}$ $\forall\{i,j\}\in\mathcal{E}$. In addition, a bound on the distance between the consensus $\tilde{x}$ asymptotically reached by all the nodes and the optimum $x^*$ of problem~\eqref{pro} is provided in Theorem~\ref{thm:rate}. It follows from \eqref{eq:optimalityerror} that $\|\tilde{x}-x^*\|\le O(\max_{\{i,j\}\in\mathcal{E}}\alpha_{\{i,j\}})$. This suggests that the smaller the $\alpha_{\{i,j\}}$'s are, the closer $\tilde{x}$ is to $x^*$, which is consistent with Theorem~\ref{thm:convepsilon}. Furthermore, as is discussed in Section~\ref{sec:AZGS}, if each $f_i$ is given by \eqref{eq:quadratic}, we have $\sum_{i\in\mathcal{V}}\nabla f_i(x_i^k)\equiv 0$ and therefore each $x_i^k$ is guaranteed to linearly converge to the exact optimum $x^*$.

\subsection{Comparison with Existing Linearly Convergent Methods} \label{ssec:convcomp}

\begin{table*}[tb]
  \centering
    \begin{tabular}{|c|c|c|c|c|c|}
      \hline
			Algorithm & Type & Strong convexity of $f_i$  & Lipschitz continuity of $\nabla f_i$  & Lipschitz continuity of $\nabla^2 f_i$ & Solution\\
      \hline
      DEAN & second-order & local & local & local & inexact\\
      \hline
			NN \cite{Mokhtari17} & second-order & global & global & global & inexact\\
      \hline
			ANN \cite{Mansoori19} & second-order & global & global & global & inexact\\
      \hline
			DQN \cite{Bajovic15} & second-order & global & global & -- & inexact \\
      \hline
		  D-BFGS \cite{Eisen17} & quasi-Newton & global & global & -- & inexact \\
      \hline
			ESOM \cite{Mokhtari16A} & second-order & global & global & global & exact\\
      \hline
			DQM \cite{Mokhtari16B} & second-order & global & global & global & exact\\
      \hline
			PD-QN \cite{Eisen19} & quasi-Newton & global & global & -- & exact\\
      \hline
			SDD-based Newton \cite{Tutunov19} & second-order & global & global & global for $(\nabla^2 f_i)^{-1}$ & exact\\
      \hline
			NRC \cite{Varagnolo16} & second-order & global & global & local & exact\\
      \hline
			DGD \cite{YuanK16} & first-order & global & global & -- & inexact\\
      \hline
			EXTRA \cite{ShiW15} & first-order & global & global & -- & exact\\
      \hline
			DIGing \cite{Nedic17} & first-order & global & global & -- & exact\\
      \hline
    \end{tabular}
		\vspace*{0.1in}
  \caption{{\upshape Comparison of Linearly Convergent Methods.}} \label{table:comparison}
\end{table*}

In Table~I, we tabulate the key assumptions required by DEAN and a number of existing methods for deriving their linear convergence rates. The existing linearly convergent methods involved for comparison here include the prior decentralized second-order methods \cite{Mokhtari16A,Mokhtari16B,Eisen17,Mokhtari17,Bajovic15,Varagnolo16,Eisen19,Tutunov19,Mansoori19} and several typical distributed first-order methods \cite{YuanK16,ShiW15,Nedic17}, among which the quasi-Newton methods \cite{Eisen17,Eisen19} and the first-order methods \cite{YuanK16,ShiW15,Nedic17} do not utilize Hessian matrices to update. Like DEAN, NN \cite{Mokhtari17}, ANN \cite{Mansoori19}, DQN \cite{Bajovic15}, D-BFGS \cite{Eisen17}, and DGD \cite{YuanK16} converge to an inexact solution, yet \cite{Mokhtari17,Mansoori19,Bajovic15,Eisen17} do not offer explicit error bounds on their inexact solutions as Theorem~\ref{thm:rate} does. ESOM \cite{Mokhtari16A}, DQM \cite{Mokhtari16B}, PD-QN \cite{Eisen19}, SDD-based Newton \cite{Tutunov19}, NRC \cite{Varagnolo16}, EXTRA \cite{ShiW15}, and DIGing \cite{Nedic17} are able to converge to the exact optimum.

The linear convergence of DEAN is established under less restrictive or different assumptions compared to the aforementioned methods. Note that $f_i$ under Assumptions~\ref{f} and~\ref{asm:coercive} is allowed to be locally strongly convex and with locally Lipschitz continuous $\nabla f_i$ and $\nabla^2 f_i$. In contrast, the existing methods in Table~I assume $f_i$ to be globally strongly convex and $\nabla f_i$ to be globally Lipschitz continuous when deriving their linear convergence rates. Moreover, our assumption on the local Lipschitz continuity of $\nabla^2f_i$ is weaker than the three times continuous differentiability of $f_i$ in \cite{Varagnolo16} as well as the global Lipschitz continuity of $\nabla^2f_i$ in \cite{Mokhtari16A,Mokhtari16B,Mokhtari17,Mansoori19}, is different from the global Lipschitz continuity of $(\nabla^2 f_i)^{-1}$ in \cite{Tutunov19}, yet is not needed in \cite{Bajovic15,Eisen17,Eisen19,YuanK16,ShiW15,Nedic17}.

Among the existing algorithms in Table~I, the second-order methods ANN, DQN, and D-BFGS and the first-order methods DGD, EXTRA, and DIGing indeed do not require global strong convexity to make all the iterates bounded, which suggests that the convergence analyses in \cite{Mansoori19,Bajovic15,Eisen17,YuanK16,ShiW15,Nedic17} may be extended to locally strongly convex problems. Nevertheless, the parameter conditions of these methods need the global Lipschitz constant of $\nabla f_i$ to be finite, which plays a role in guaranteeing the boundedness of the iterates. The remaining existing algorithms in Table~I all require the global strong convexity of $f_i$ and the global Lipschitz continuity of $\nabla f_i$, without which their parameter conditions would be invalid and their analyses would not work. Therefore, it is nontrivial to extend the analyses of the existing methods in Table~I to problems under our assumptions. 

\section{Numerical Examples} \label{sec:numeexam}

In this section, we demonstrate the competent performance of DEAN via comparisons with a number of typical decentralized second-order and first-order methods.

Consider the following logistic regression problem \cite{Bach14} that often arises in machine learning: Suppose there are $N$ interconnected nodes, and $m$ training samples $(u_{ij}, v_{ij})\in \mathbb{R}^n\times \{-1,+1\}$ $\forall j=1,\dots,m$ are assigned to each node $i$, where $u_{ij}\in\mathbb{R}^n$ is the feature vector whose last element is $1$ and $v_{ij}\in\{-1,+1\}$ is the label. The goal is to predict the probability $P(v =1|u)=1/(1+\text{exp}(-u^Tx))$ of having label $v = 1$ given a feature vector $u$ whose class is unknown. This is equivalent to
\begin{equation}\label{pro:log}
\min_{x\in \mathbb{R}^n} \sum_{i=1}^N \sum_{j=1}^{m}\ln(1+\exp(-v_{ij}u_{ij}^Tx)).
\end{equation}
Problem~\eqref{pro:log} is locally strongly convex provided that the sample number $m$ is sufficiently large. In the simulations, we consider both low-dimensional and high-dimensional cases with $n=10$ and $n=30$, respectively, and set $N=20$ and $m=2n$. We suppose each node has $n$ samples with label $1$ and $n$ samples with label $-1$. The first $n-1$ elements of each feature vector $u_{ij}$ are generated from normal distributions with variance $1$ and with mean $\pm10$ corresponding to $v_{ij}=\pm1$, respectively. We also consider networks of different densities by choosing the average degree (i.e., the average of $|\mathcal{N}_i|$ $\forall i\in\mathcal{V}$) to be $4$ and $10$. To generate such networks, we first generate a connected $(N-1)$-link graph and then randomly add links until the average degree reaches the given value.

We compare DEAN with the state-of-the-art second-order methods DQM \cite{Mokhtari16B}, DQN-K \cite{Bajovic15}, ESOM-K \cite{Mokhtari16A}, NN-K \cite{Mokhtari17}, and NRC \cite{Varagnolo16}, as well as two recent first-order methods EXTRA \cite{ShiW15} and DIGing \cite{Nedic17}. For NN-K, DQN-K, and ESOM-K, we only consider $\text{K}=0$, in which case their communication costs per iteration are the same as that of DEAN. 

In the simulation results that we present below, the algorithm parameters are set as follows: The weight matrix $W$ in NN and DQN is set as $[W]_{ij}=1/(\max\{|\mathcal{N}_i|,|\mathcal{N}_j|\}+2)$ $\forall\{i,j\}\in \mathcal{E}$, $[W]_{ij}=0$ $\forall \{i,j\}\notin\mathcal{E}$, $i\neq j$, and $[W]_{ii}=1-\sum_{j\in \mathcal{N}_i}[W]_{ij}$ $\forall i\in\mathcal{V}$. In addition, the implementation of NRC needs a positive global convexity parameter of $\sum_{i\in\mathcal{V}}f_i(x)$, while problem~\eqref{pro:log} does not have that. Thus, we artificially set the convexity parameter in NRC to be $1$. The remaining parameters are all hand-optimized, in the sense that each of them is picked from a great number of discrete points spanning a sufficiently large interval (which strictly includes the range for convergence in practice) such that the resulting optimality error after a given number $T$ of iterations is minimal. Here, the optimality error is defined as $e(k)=\|\sum_{i\in\mathcal{V}}\nabla f_i(x_i^k)\|+(\sum_{i\in\mathcal{V}}\|x_i^k-\frac{1}{N}\sum_{j\in\mathcal{V}}x_j^k\|^2)^{\frac{1}{2}}$, which involves the global objective gradient and the disagreements among the nodes. Moreover, the initialization \eqref{eq:xi0} of DEAN is calculated using YALMIP \cite{Lofberg04}. For fair comparison, we let all the algorithms start from the same initial states. 

Figures~\ref{fig:local_dim10} and~\ref{fig:local_dim30} illustrate the practical convergence performance during $T=100$ iterations for each of the aforementioned algorithms when solving $10$-dimensional and $30$-dimensional logistic regression problems on networks with average degrees $4$ and $10$, respectively. To reveal the convergence speed, we plot $e(k)/e(0)$ in the figures. Observe from Figures~\ref{fig:local_dim10} and~\ref{fig:local_dim30} that lower problem dimension and denser network connectivity lead to faster convergence of DEAN. Compared to the aforementioned existing algorithms, initially DEAN has no advantage in error reduction, yet it catches up quickly and eventually outperforms the other methods. 

\begin{figure*}[!htb]
\centering
\subfigure[Network of average degree $4$]{
\includegraphics[width=0.35\linewidth, height=0.27\linewidth]{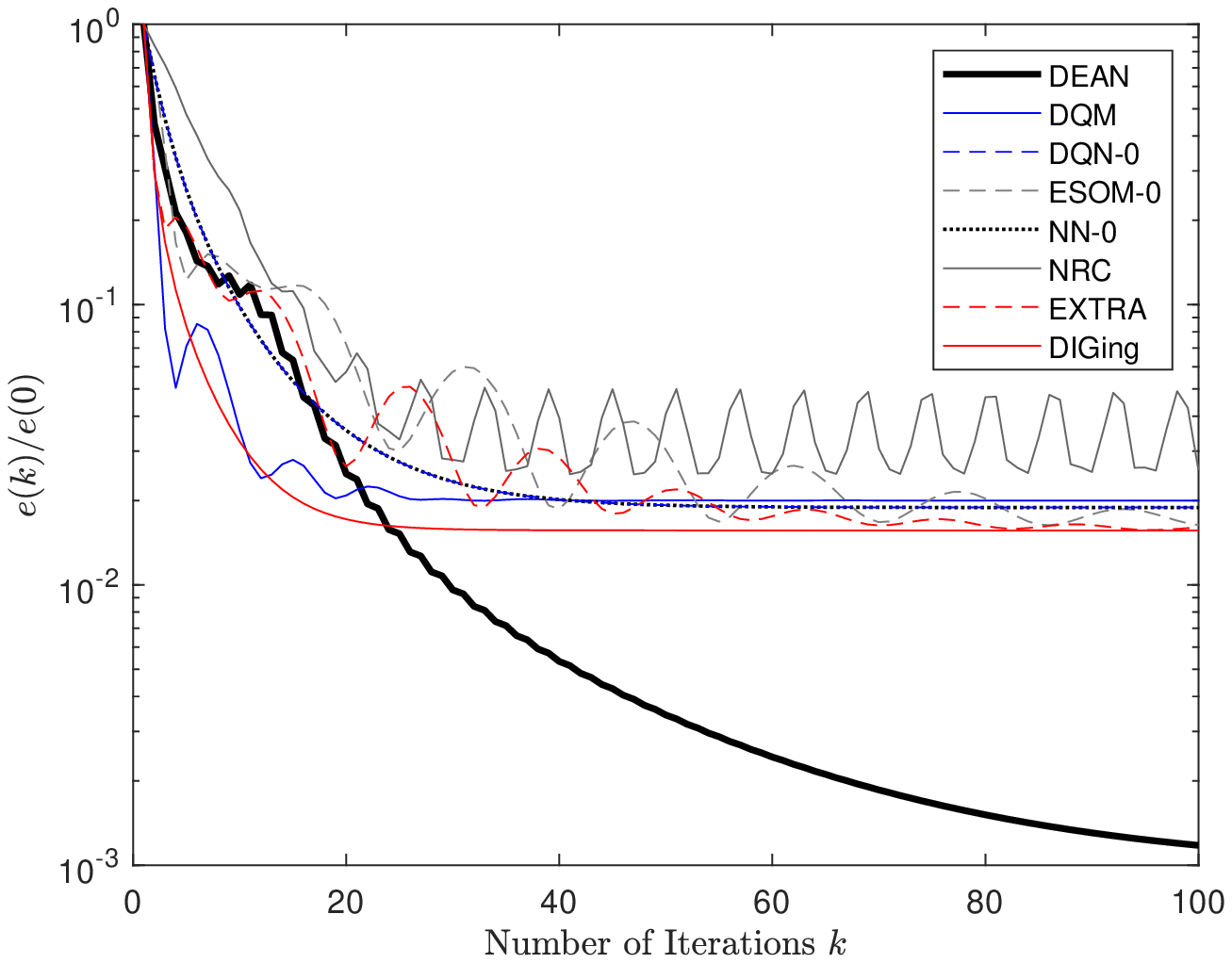}\label{fig:(10a)}}
\hspace{0.1\linewidth}
\subfigure[Network of average degree $10$]{
\includegraphics[width=0.35\linewidth, height=0.27\linewidth]{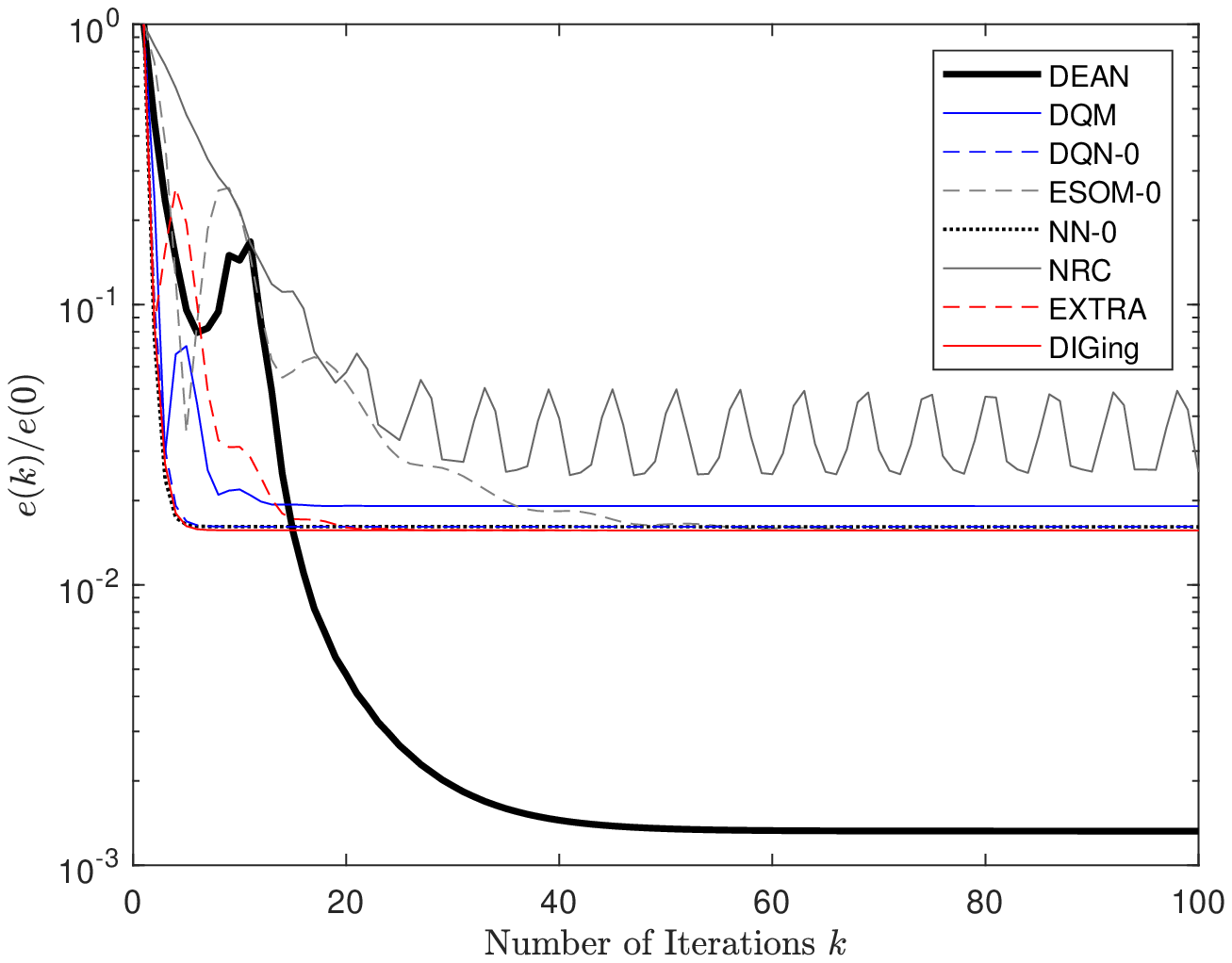}\label{fig:(10b)}}
\caption{Convergence performance for $10$-dimensional problem.}
\label{fig:local_dim10}
\end{figure*}

\begin{figure*}[!htb]
\centering
\subfigure[Network of average degree $4$]{
\includegraphics[width=0.35\linewidth, height=0.27\linewidth]{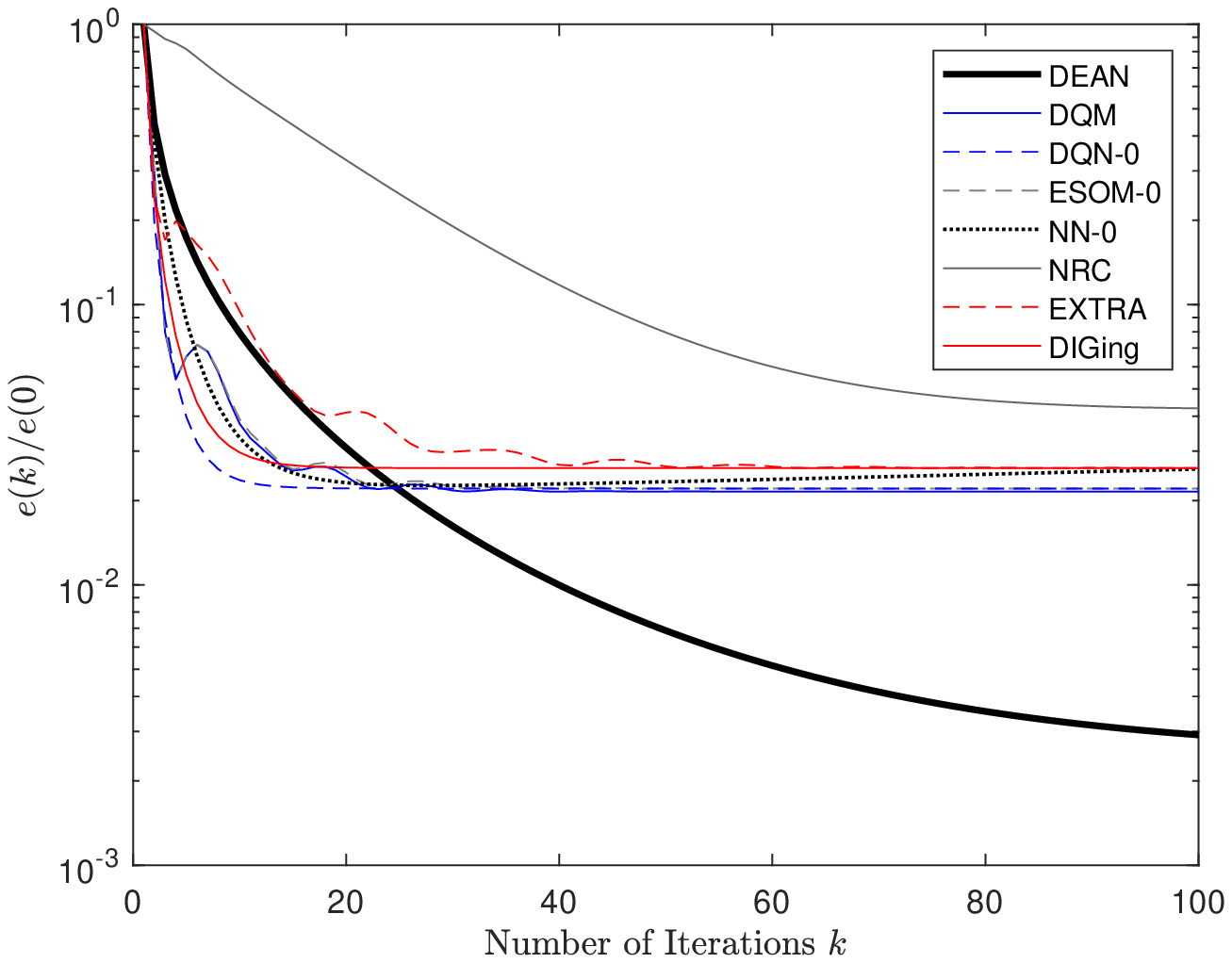}\label{fig:(30a)}}
\hspace{0.1\linewidth}
\subfigure[Network of average degree $10$]{
\includegraphics[width=0.35\linewidth, height=0.27\linewidth]{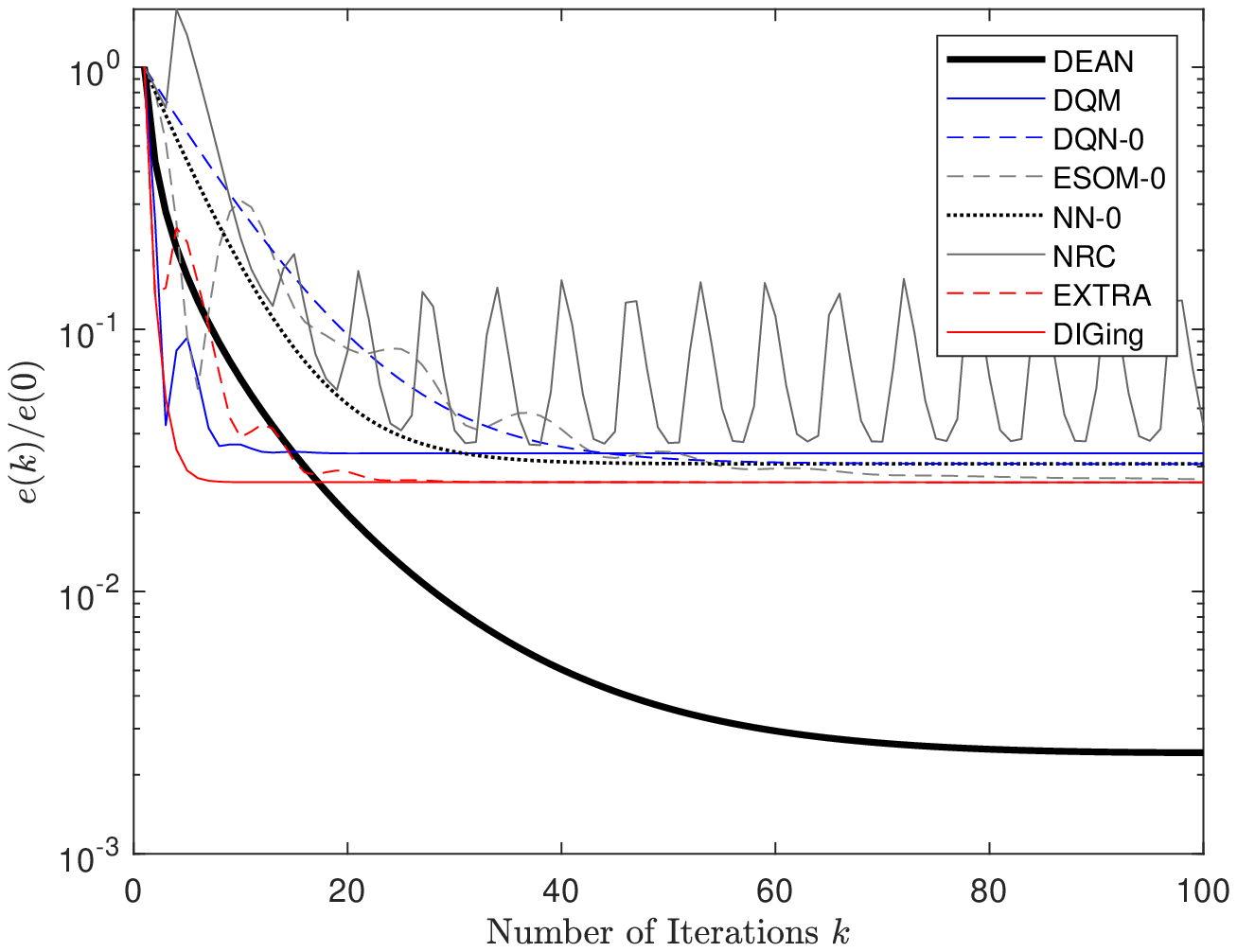}\label{fig:(30b)}}
\caption{Convergence performance for $30$-dimensional problem.}
\label{fig:local_dim30}
\end{figure*}

For the four scenarios in Figures~\ref{fig:local_dim10} and~\ref{fig:local_dim30}, Tables~II and~III list the corresponding running time for each algorithm to complete $T=100$ iterations via MATLAB on a standard computer (Intel(R) Core(TM) i7-8565U CPU @1.80GHz). It can be seen that DEAN takes less time than the second-order methods DQM, DQN-0, ESOM-0, NN-0, and NRC in all of the four scenarios, which indicates that DEAN is a computationally efficient second-order algorithm. Compared to the first-order methods EXTRA and DIGing, DEAN mostly requires less time when the problem dimension is $10$ but needs more time when the problem dimension increases to $30$. This suggests that the first-order methods are more scalable with respect to the problem dimension, which is natural because the first-order methods do not need to evaluate Hessian inverses as the second-order methods do.

\begin{table}[ht]
\centering
	\begin{tabular}{|l|c|c|}
		\hline
Algorithm & Average degree $4$ & Average degree $10$ \\ \hline
DEAN& 179.4&183.2\\
\hline
DQM&323.6&346.3\\
\hline
DQN-0&271.5&303.7\\
\hline
ESOM-0&317.6&374.7\\
\hline
NN-0&318.6&307.5\\
\hline
NRC&1400.6&3154.6\\
\hline
EXTRA&164.1&222.9\\
\hline
DIGing&211.1&221.2\\
\hline
\end{tabular}
\vspace{0.03in}
\caption{Running time (\textit{ms}) of $100$ iterations for $10$-dimensional problem.}
\end{table}

\begin{table}[ht]
\centering
	\begin{tabular}{|l|c|c|}
		\hline
Algorithm & Average degree $4$ & Average degree $10$ \\ \hline
DEAN& 501.3 &  497.8\\
\hline
DQM&869.5 & 783.5 \\
\hline
DQN-0&809.4 & 917.8\\
\hline
ESOM-0&998.7&895.7\\
\hline
NN-0&914.1&960.9\\
\hline
NRC&4434.6&9180.9\\
\hline
EXTRA&366.8&394.3\\
\hline
DIGing&393.8&390.0\\
\hline
\end{tabular}
\vspace{0.03in}
\caption{Running time (\textit{ms}) of $100$ iterations for $30$-dimensional problem.}
\end{table}

\section{Conclusion}\label{sec:conclusion}

In this paper, we have developed a set of novel decentralized approximate Newton (DEAN) methods for convex optimization on networked systems. With appropriate algorithm parameters, the DEAN algorithms allow nodes in a network to reach a consensus at a linear rate, which can be sufficiently close to the optimal solution. Compared with most existing decentralized second-order methods, the DEAN algorithms relax the global strong convexity assumption on the objective functions to local strong convexity and still establish the convergence. Finally, we have demonstrated the competitive convergence performance as well as the computational efficiency of DEAN via numerical examples. Possible extensions of this work include developing broadcast versions of DEAN, allowing block updates of the local states, and considering packet losses in communications.



\appendix
\subsection{Proof of Lemma~\ref{lem:monoto}}\label{sec:proofofstepsize}

For each $k\ge0$, define for convenience that $\Delta V(\mathbf{x}^k)=V(\mathbf{x}^{k+1})-V(\mathbf{x}^{k})$ and
\begin{equation}\label{phiijk1}
\phi_{ij}^{k}=\nabla g_{\{i,j\}}(x_j^{k})-\nabla g_{\{i,j\}}(x_i^{k}),\quad\forall\{i,j\}\in\mathcal{E}.
\end{equation}
Note that $\phi_{ij}^k=-\phi_{ji}^k$. 

Below, we prove by induction that $V(\mathbf{x}^k)\le V(\mathbf{x}^0)$ $\forall k\ge0$. Clearly, this is true for $k=0$. Now suppose $V(\mathbf{x}^\ell)\le V(\mathbf{x}^0)$ for some $\ell\ge 0$, and we want to prove $V(\mathbf{x}^{\ell+1})\le V(\mathbf{x}^0)$ by showing that $\Delta V(\mathbf{x}^\ell)\le 0$.

Toward showing $\Delta V(\mathbf{x}^\ell)\le 0$, we first show that $x_i^{\ell+1}\in\mathcal{C}'_i$ $\forall i\in\mathcal{V}$. Note from $V(\mathbf{x}^\ell)\le V(\mathbf{x}^0)$ that $x_i^\ell\in\mathcal{C}_i$. Thus, due to the local strong convexity of $f_i$,
\begin{align}
&\|x_i^\ell-x^*\|\le\sqrt{\frac{2V(\mathbf{x}^0)}{\theta_i}},\label{eq:xx*<=sqrt2Vtheta}\displaybreak[0]\\
&\|(\nabla^2f_i(x_i^\ell))^{-1}\|\le\frac{1}{\theta_i}.\label{eq:nabla2f<=1theta}
\end{align}
Because of \eqref{scheme}, \eqref{eq:nabla2f<=1theta}, \eqref{phiijk1}, and \eqref{eq:xx*<=sqrt2Vtheta}, and because $\nabla g_{\{i,j\}}$ is Lipschitz on $\operatorname{conv}\{\mathcal{C}_i\cup\mathcal{C}_j\}$ which contains both $x_i^\ell$ and $x_j^\ell$,
\begin{align}
\|x_i^{\ell+1}-x_i^\ell\|&=\|(\nabla^2f_i(x_i^\ell))^{-1}\sum_{j\in\mathcal{N}_i}\alpha_{\{i,j\}}\phi_{ij}^\ell\|\nonumber\displaybreak[0]\\
&\le\frac{1}{\theta_i}\sum_{j\in\mathcal{N}_i}\alpha_{\{i,j\}}\|\phi_{ij}^\ell\|\label{eq:xx<=1thetaalphaphi}\displaybreak[0]\\
&\le\frac{1}{\theta_i}\sum_{j\in\mathcal{N}_i}\alpha_{\{i,j\}}\Gamma_{\{i,j\}}\|x_i^\ell-x_j^\ell\|\nonumber\displaybreak[0]\\
&\le\frac{\bar{\alpha}}{\theta_i}\sum_{j\in\mathcal{N}_i}\Gamma_{\{i,j\}}(\|x_i^\ell-x^*\|+\|x_j^\ell-x^*\|)\nonumber\displaybreak[0]\\
&\le\frac{\bar{\alpha}}{\theta_i}\sum_{j\in\mathcal{N}_i}\Gamma_{\{i,j\}}\left(\sqrt{\frac{2V(\mathbf{x}^0)}{\theta_i}}+\sqrt{\frac{2V(\mathbf{x}^0)}{\theta_j}}\right).\nonumber
\end{align}
Since the right-hand side of the above inequality is exactly $\delta_i$ given in \eqref{eq:delta} and since $x_i^\ell\in\mathcal{C}_i$, we have $x_i^{\ell+1}\in\mathcal{C}'_i\supseteq\operatorname{conv}\mathcal{C}_i$. 

Next, we provide an upper bound on $\Delta V(\mathbf{x}^\ell)$. Note that $\Delta V(\mathbf{x}^\ell)$ can be written as
\begin{align}
\Delta V(\mathbf{x}^\ell)=&\sum_{i\in \mathcal{V}}\left(f_i(x_i^{\ell})-f_i(x_{i}^{\ell+1})+\nabla f_i(x_{i}^{\ell})^T(x_{i}^{\ell+1}-x_{i}^{\ell})\right) \displaybreak[0] \nonumber\\
&+\sum_{i\in \mathcal{V}}\left(\nabla f_i(x_{i}^{\ell+1})-\nabla f_i(x_{i}^{\ell})\right)^T(x_{i}^{\ell+1}-x_{i}^{\ell})+\sum_{i\in \mathcal{V}}(x_{i}^{\ell}-x^*)^T\left(\nabla f_i(x_{i}^{\ell+1})-\nabla f_i(x_{i}^{\ell})\right).\label{eq:deltaV}
\end{align}
Applying $\nabla f_i(x_i^{\ell+1})-\nabla f_i(x_i^{\ell})=\int_{0}^1\nabla^2f_i(x_i^{\ell}+s(x_i^{\ell+1}-x_i^{\ell}))(x_i^{\ell+1}-x_i^{\ell})ds$ to the term $\sum_{i\in \mathcal{V}}(x_{i}^{\ell}-x^*)^T\left(\nabla f_i(x_{i}^{\ell+1})-\nabla f_i(x_{i}^{\ell})\right)$ in \eqref{eq:deltaV} yields
\begin{align}
&\sum_{i\in \mathcal{V}}(x_i^{\ell}-x^*)^T(\nabla f_i(x_i^{\ell+1})-\nabla f_i(x_i^{\ell}))\displaybreak[0]\nonumber\\
=&\sum_{i\in \mathcal{V}}(x_i^{\ell}-x^*)^T\nabla^2f_i(x_i^{\ell})(x_i^{\ell+1}-x_i^{\ell})\displaybreak[0]\nonumber\\
+&\sum_{i\in \mathcal{V}}(x_i^{\ell}\!-\!x^*)^T\!\!\int_{0}^1\!\!(\nabla^2 f_i(x_i^{\ell}\!+\!s(x_i^{\ell+1}\!-\!x_i^{\ell}))\!-\!\nabla^2 f_i(x_i^{\ell}))ds\cdot(x_i^{\ell+1}-x_i^{\ell}).\label{xisi}
\end{align}
On the right-hand side of \eqref{xisi}, the term $\sum_{i\in \mathcal{V}}(x_i^{\ell}-x^*)^T\nabla^2f_i(x_i^{\ell})(x_i^{\ell+1}-x_i^{\ell})$ can be written as $\sum_{i\in \mathcal{V}}(x_i^{\ell}-x^*)^T\sum_{j\in \mathcal{N}_i}\alpha_{\{i,j\}}\phi_{ij}^{\ell}$ due to \eqref{scheme} and \eqref{phiijk1}. Moreover, because $\phi_{ij}^\ell=-\phi_{ji}^\ell$ and $\alpha_{\{i,j\}}=\alpha_{\{j,i\}}$ $\forall i\in\mathcal{V}$ $\forall j\in\mathcal{N}_i$, we have $\sum\limits_{i\in\mathcal{V}}\sum\limits_{j\in\mathcal{N}_i}\alpha_{\{i,j\}}\phi_{ij}^\ell=0$. Therefore,
\begin{align}
&\sum_{i\in \mathcal{V}}(x_i^{\ell}-x^*)^T\nabla^2f_i(x_i^{\ell})(x_i^{\ell+1}-x_i^{\ell})\nonumber\displaybreak[0]\\
=&\sum_{i\in\mathcal{V}}(x_i^{\ell})^T\!\sum_{j\in\mathcal{N}_i}\!\alpha_{\{i,j\}}\phi_{ij}^{\ell}=\sum_{i\in\mathcal{V}}\sum_{j\in\mathcal{N}_i}\dfrac{1}{2}\alpha_{\{i,j\}}(x_i^{\ell}-x_j^{\ell})^T\phi_{ij}^{\ell}.\nonumber
\end{align}
We incorporate this into \eqref{xisi} and then combine the resulting equation with \eqref{eq:deltaV}, leading to
\begin{align}
&\Delta V(\mathbf{x}^\ell)\le\sum_{i\in \mathcal{V}}\left(f_i(x_i^{\ell})-f_i(x_{i}^{\ell+1})+\nabla f_i(x_{i}^{\ell})^T(x_{i}^{\ell+1}-x_{i}^{\ell})\right) \displaybreak[0] \nonumber\\
&+\sum_{i\in \mathcal{V}}\left(\nabla f_i(x_{i}^{\ell+1})-\nabla f_i(x_{i}^{\ell})\right)^T(x_{i}^{\ell+1}-x_{i}^{\ell})\displaybreak[0]\nonumber\\
&+\sum_{i\in\mathcal{V}}\!\sum_{j\in\mathcal{N}_i}\!\!\dfrac{1}{2}\alpha_{\{i,j\}}(x_i^{\ell}\!-\!x_j^{\ell})^T\phi_{ij}^{\ell}\!+\!\sum_{i\in\mathcal{V}}\!\|x_i^{\ell}\!-\!x^*\|\!\cdot\!\|x_i^{\ell+1}\!\!-\!x_i^{\ell}\|\cdot\int_{0}^{1}\|\nabla^2 f_i(x_i^{\ell}\!+\!s(x_i^{\ell+1}\!\!-\!x_i^{\ell}))\!-\!\nabla^2\!f_i(x_i^{\ell})\|ds.\label{eq:deltaVV}
\end{align}
We further bound $(x_i^{\ell}-x_j^{\ell})^T\phi_{ij}^{\ell}$ in \eqref{eq:deltaVV}. To do so, note that $\phi_{ij}^\ell$ defined in \eqref{phiijk1} can be expressed as
\begin{align*}
\phi_{ij}^\ell=\int_0^1\nabla^2 g_{\{i,j\}}(x_i^\ell+s(x_j^\ell-x_i^\ell))(x_j^\ell-x_i^\ell)ds.
\end{align*}
For simplicity, let $A_\phi:=\int_0^1\nabla^2 g_{\{i,j\}}(x_i^\ell+s(x_j^\ell-x_i^\ell))ds$, so that $\phi_{ij}^\ell=A_\phi(x_j^\ell-x_i^\ell)$. Since $g_{\{i,j\}}$ is twice continuously differentiable and locally strongly convex, $A_\phi$ is symmetric positive definite. Also, from the Lipschitz continuity of $\nabla g_{\{i,j\}}$ on $\operatorname{conv}\{\mathcal{C}_i\cup\mathcal{C}_j\}$ which contains $x_i^\ell$ and $x_j^\ell$, we have $\|A_\phi\|\le\Gamma_{\{i,j\}}$ and, thus, $\Gamma_{\{i,j\}}A_\phi\succeq A_\phi^2$. As a result, $\|\phi_{ij}^\ell\|^2=\|A_\phi (x_j^\ell-x_i^\ell)\|^2\le \Gamma_{\{i,j\}}(x_j^\ell-x_i^\ell)^TA_\phi(x_j^\ell-x_i^\ell)=\Gamma_{\{i,j\}}(x_j^\ell-x_i^\ell)^T\phi_{ij}^\ell$, i.e.,
\begin{equation}\label{ieq:Gamma}
(x_i^{\ell}-x_j^{\ell})^T\phi_{ij}^{\ell}\leq-\dfrac{\|\phi_{ij}^{\ell}\|^2}{\Gamma_{\{i,j\}}}.
\end{equation}
By utilizing the strong convexity of $f_i$ on $\mathcal{C}'_i$, the Lipschitz continuity of $\nabla f_i$ and $\nabla^2 f_i$ on $\mathcal{C}'_i$, \eqref{eq:deltaVV}, \eqref{ieq:Gamma}, and \eqref{eq:xx*<=sqrt2Vtheta},
\begin{align}
&\Delta V(\mathbf{x}^\ell)\!\le\!-\!\sum_{i\in\mathcal{V}}\!\frac{\theta'_i}{2}\|x_i^{\ell+1}\!-\!x_i^\ell\|^2\!+\!\sum_{i\in\mathcal{V}}\Theta'_i\|x_i^{\ell+1}\!-\!x_i^\ell\|^2\!-\!\sum_{i\in\mathcal{V}}\!\sum_{j\in\mathcal{N}_i}\!\frac{\alpha_{\{i,j\}}}{2\Gamma_{\{i,j\}}}\|\phi_{ij}^{\ell}\|^2\!+\!\sum_{i\in\mathcal{V}}\frac{L'_i}{2}\sqrt{\frac{2V(\mathbf{x}^0)}{\theta_i}}\|x_i^{\ell+1}\!-\!x_i^\ell\|^2.\label{eq:deltaVnew}
\end{align}
In addition, due to \eqref{eq:xx<=1thetaalphaphi}, we have
\begin{align}
\|x_i^{\ell+1}-x_i^\ell\|^2\le\frac{|\mathcal{N}_i|}{\theta_i^2}\sum_{j\in\mathcal{N}_i}\alpha_{\{i,j\}}^2\|\phi_{ij}^\ell\|^2.\label{eq:xx<=Nithetasumalphaphi}
\end{align}
Since $0<\theta'_i\le\Theta'_i$, we are able to substitute \eqref{eq:xx<=Nithetasumalphaphi} into \eqref{eq:deltaVnew} and derive
\begin{align}
&\Delta V(\mathbf{x}^\ell)\le-\sum_{i\in\mathcal{V}}\sum_{j\in\mathcal{N}_i}\|\phi_{ij}^\ell\|^2\cdot\alpha_{\{i,j\}}\cdot\left[\left(\frac{\theta'_i}{2}-\Theta'_i-\frac{L'_i}{2}\sqrt{\frac{2V(\mathbf{x}^0)}{\theta_i}}\right)\frac{|\mathcal{N}_i|}{\theta_i^2}\alpha_{\{i,j\}}+\frac{1}{2\Gamma_{\{i,j\}}}\right].\label{eq:deltaVfinal}
\end{align}

Due to \eqref{eq:stepsizedrop}, the term inside the bracket in \eqref{eq:deltaVfinal} is guaranteed to be positive. Therefore, $\Delta V(\mathbf{x}^\ell)\le 0$, so that $V(\mathbf{x}^{\ell+1})\le V(\mathbf{x}^\ell)\le V(\mathbf{x}^0)$.

Now we have shown that $V(\mathbf{x}^k)\le V(\mathbf{x}^0)$ $\forall k\ge0$, which suggests $x_i^k\in\mathcal{C}_i$ $\forall i\in\mathcal{V}$ $\forall k\ge0$. Then, similar to the derivation of \eqref{eq:deltaVnew}, we can see from \eqref{eq:deltaVV}, \eqref{ieq:Gamma}, \eqref{eq:xx*<=sqrt2Vtheta}, the strong convexity of $f_i$ on $\operatorname{conv}\mathcal{C}_i$, and the Lipschitz continuity of $\nabla f_i$ and $\nabla^2 f_i$ on $\operatorname{conv}\mathcal{C}_i$ that
\begin{align}
\Delta V(\mathbf{x}^\ell)\le&-\sum_{i\in\mathcal{V}}\frac{\theta_i}{2}\|x_i^{\ell+1}-x_i^\ell\|^2+\sum_{i\in\mathcal{V}}\Theta_i\|x_i^{\ell+1}-x_i^\ell\|^2\nonumber\displaybreak[0]\\
&-\sum_{i\in\mathcal{V}}\sum_{j\in\mathcal{N}_i}\frac{\alpha_{\{i,j\}}}{2\Gamma_{\{i,j\}}}\|\phi_{ij}^{\ell}\|^2+\sum_{i\in\mathcal{V}}\frac{L_i}{2}\sqrt{\frac{2V(\mathbf{x}^0)}{\theta_i}}\|x_i^{\ell+1}-x_i^\ell\|^2.\label{eq:deltaVnewtighter}
\end{align}
This, along with \eqref{eq:xx<=Nithetasumalphaphi}, implies that \eqref{eq:deltaVfinal} with $\Theta'_i$, $\theta'_i$, and $L'_i$ replaced by $\Theta_i$, $\theta_i$, and $L_i$, respectively, still holds, whose right-hand side is also nonpositive because of \eqref{eq:stepsizedrop} and because $\Theta_i\le\Theta'_i$, $\theta_i\ge\theta'_i$, and $L_i\le L'_i$. Moreover, since $g_{\{i,j\}}$ $\forall\{i,j\}\in\mathcal{E}$ are strongly convex on $\operatorname{conv}\{\mathcal{C}_i\cup\mathcal{C}_j\}$, we have $\|\phi_{ij}^k\|\cdot\|x_j^k-x_i^k\|\ge (\phi_{ij}^k)^T(x_j^k-x_i^k)\ge\gamma_{\{i,j\}}\|x_j^k-x_i^k\|^2$, which gives 
\begin{align*}
\|\phi_{ij}^k\|^2\ge\gamma_{\{i,j\}}^2\|x_j^k-x_i^k\|^2.
\end{align*}
It follows that \eqref{eq:Vdrop} holds.

\subsection{Proof of Theorem~\ref{theo:firsttheo}}\label{sec:proofoftheo:firsttheo}

Note from Lemma~\ref{lem:monoto} that $V(\mathbf{x}^k)$ is non-increasing. In addition, since $V(\mathbf{x}^k)$ is bounded from below by $0$, $\lim_{k\rightarrow\infty}V(\mathbf{x}^k)$ exists. This, along with \eqref{eq:Vdrop}, implies that $\sum_{k=0}^\infty\sum_{\{i,j\}\in\mathcal{E}} \|x_i^{k}-x_j^k\|^2$ is finite. It then follows that for any $\{i,j\}\in\mathcal{E}$, $\lim_{k\rightarrow\infty}\|x_i^k-x_j^k\|=0$. Since $\mathcal{G}$ is connected, this leads to \eqref{eq:consensuserrorbound}.

\subsection{Proof of Theorem~\ref{thm:convepsilon}}\label{sec:proofofthm:convergence}

First of all, note that the right-hand side of \eqref{eq:stepsizedrop} is equal to $\frac{1}{\Gamma_{\{i,j\}}}\min\{\frac{1}{\tilde{\eta}_i},\frac{1}{\tilde{\eta}_j}\}$. Hence, \eqref{eq:stepsizeepsilon} suffices to meet \eqref{eq:stepsizedrop}, so that Lemma~\ref{lem:monoto} holds here and $x_i^k\in\mathcal{C}_i$ $\forall i\in\mathcal{V}$ $\forall k\ge0$. Below, we provide an upper bound on $\|\mathbf{x}^k-\mathbf{x}^*\|$ which goes below $\epsilon$ as $k\rightarrow\infty$. To this end, define $\bar{x}^k=\frac{1}{N}\sum_{i\in\mathcal{V}}x_i^k$ as the average of $x_i^k$ $\forall i\in\mathcal{V}$ and define $\bar{\mathbf{x}}^k=[\bar{x}^k;\dots;\bar{x}^k]\in\mathbb{R}^{nN}$ for each $k\ge0$. Note that
\begin{align}
\|\mathbf{x}^k-\mathbf{x}^*\|&\le\|\bar{\mathbf{x}}^k-\mathbf{x}^*\|+\|\bar{\mathbf{x}}^k-\mathbf{x}^k\|\nonumber\displaybreak[0]\\
&=\sqrt{N}\|\bar{x}^k-x^*\|+\|\bar{\mathbf{x}}^k-\mathbf{x}^k\|. \label{eq:xx<=sqrtNxx+xx}
\end{align}
On the other hand, note that $\bar{x}^k\in\operatorname{conv}\cup_{i\in\mathcal{V}}\mathcal{C}_i$ and $x^*\in\cap_{i\in\mathcal{V}}\mathcal{C}_i$. Hence, using the local strong convexity of each $f_i$,
\begin{align*}
&\|\sum_{i\in\mathcal{V}}\big(\nabla f_i(\bar{x}^k)-\nabla f_i(x^*)\big)\|\cdot\|\bar{x}^k-x^*\|\displaybreak[0]\\
\ge&\sum_{i\in\mathcal{V}}\big(\nabla f_i(\bar{x}^k)-\nabla f_i(x^*)\big)^T(\bar{x}^k-x^*)\ge\sum_{i\in\mathcal{V}}\bar{\theta}_i\|\bar{x}^k-x^*\|^2.
\end{align*}
Since $\sum_{i\in\mathcal{V}}\nabla f_i(x^*)=0$, the above inequality gives
\begin{align*}
\|\bar{x}^k-x^*\|\le\frac{1}{\sum_{i\in\mathcal{V}}\bar{\theta}_i}\|\sum_{i\in\mathcal{V}}\nabla f_i(\bar{x}^k)\|.
\end{align*}
It then follows from \eqref{eq:xx<=sqrtNxx+xx} that
\begin{align}
\|\mathbf{x}^k-\mathbf{x}^*\|\le\frac{\sqrt{N}}{\sum_{i\in\mathcal{V}}\!\bar{\theta}_i}\|\sum_{i\in\mathcal{V}}\!\nabla f_i(\bar{x}^k)\|+\|\bar{\mathbf{x}}^k-\mathbf{x}^k\|.\label{eq:optimaldist}
\end{align}
Note that 
\begin{align}
\|\!\sum_{i\in\mathcal{V}}\!\nabla\! f_i(\bar{x}^k)\|&\le\!\|\!\sum_{i\in\mathcal{V}}\!\!\big(\nabla\!f_i(\bar{x}^k)\!-\!\nabla\! f_i(x_i^k)\big)\|\!+\!\|\!\sum_{i\in\mathcal{V}}\!\nabla\!f_i(x_i^k)\|\displaybreak[0]\nonumber\\
&\leq\sum_{i\in\mathcal{V}}\bar{\Theta}_i\|\bar{x}^k-{x}_i^k\|+\!\|\sum_{i\in\mathcal{V}}\!\nabla f_i(x_i^k)\|\displaybreak[0]\nonumber\\
&\leq\max_{i\in\mathcal{V}}\bar{\Theta}_i\sqrt{N}\|\bar{\mathbf{x}}^k-\mathbf{x}^k\|+\!\|\sum_{i\in\mathcal{V}}\!\nabla f_i(x_i^k)\|.\nonumber
\end{align}
Combining this with \eqref{eq:optimaldist} results in
\begin{align}
&\|\mathbf{x}^k-\mathbf{x}^*\|\le\|\bar{\mathbf{x}}^k-\mathbf{x}^k\|+\frac{\sqrt{N}}{\sum_{i\in\mathcal{V}}\!\bar{\theta}_i}(\max_{i\in\mathcal{V}}\bar{\Theta}_i\sqrt{N}\|\bar{\mathbf{x}}^k-\mathbf{x}^k\|+\!\|\sum_{i\in\mathcal{V}}\!\nabla f_i(x_i^k)\|).\label{eq:xx<=xx+NthetaNThetaxx+sumnablaf}
\end{align}
Subsequently, we bound $\|\sum_{i\in\mathcal{V}}\!\nabla f_i(x_i^k)\|$ in \eqref{eq:xx<=xx+NthetaNThetaxx+sumnablaf}. From Lemma~\ref{lem:monoto}, $x_i^t$ $\forall t\ge0$ are contained in $\mathcal{C}_i$. Then, due to \eqref{eq:xi0}, \eqref{eq:gradientsumchange}, and the local Lipschitz continuity of $\nabla^2 f_i$,
\begin{align}
\|\sum_{i\in \mathcal{V}}\nabla f_i(x_i^k)\|&=\|\sum_{i\in \mathcal{V}}\big(\nabla f_i(x_i^k)-\nabla f_i(x_i^0)\big)\|\nonumber\displaybreak[0]\\
&\le\sum_{t=0}^{k-1}\|\sum_{i\in \mathcal{V}}\big(\nabla\! f_i(x_i^{t+1})-\nabla f_i(x_i^t)\big)\|\nonumber\displaybreak[0]\\
&\le\sum_{t=0}^{k-1}\sum_{i\in\mathcal{V}}\dfrac{L_i}{2}\|x_i^{t+1}-x_i^t\|^2.\label{eq:sumnablaf<=sumsumL2xx}
\end{align}
Besides, due to \eqref{eq:xx<=Nithetasumalphaphi},
\begin{align}
-\sum_{i\in\mathcal{V}}\sum_{j\in\mathcal{N}_i}\dfrac{\alpha_{\{i,j\}}}{2\Gamma_{\{i,j\}}}\|\phi_{ij}^k\|^2&\leq\!-\!\!\sum_{i\in\mathcal{V}}\!\dfrac{\theta_i^2}{2|\mathcal{N}_i|\!\max_{j\in\mathcal{N}_i}\!\big\{\alpha_{\{i,j\}}\Gamma_{\{i,j\}}\big\}}\!\cdot\!\dfrac{|\mathcal{N}_i|}{\theta_i^2}\!\!\sum_{j\in\mathcal{N}_i}\!\!\alpha_{\{i,j\}}^2\|\phi_{ij}^k\|^2\!\nonumber\\
&\le-\!\sum_{i\in\mathcal{V}}\!\dfrac{\theta_i^2}{2|\mathcal{N}_i|\max_{j\in\mathcal{N}_i}\!\big\{\alpha_{\{i,j\}}\Gamma_{\{i,j\}}\big\}}\|x_i^{k+1}\!-\!x_i^k\|^2.\nonumber
\end{align}
This, together with \eqref{eq:deltaVnewtighter}, implies that
\begin{align*}
V(\mathbf{x}^{k+1})-V(\mathbf{x}^k)\le-\sum_{i\in\mathcal{V}}\rho_i\|x_i^{k+1}-x_i^k\|^2,
\end{align*}
where, due to \eqref{eq:stepsizedrop}, $\rho_i$ given below is positive: 
\begin{equation}\label{eq:rhoi}
\rho_i=\dfrac{\theta_i^2}{2|\mathcal{N}_i|\!\max_{j\in\mathcal{N}_i}\!\big\{\alpha_{\{i,j\}}\Gamma_{\{i,j\}}\big\}}+\frac{\theta_i}{2}-\Theta_i-\frac{L_i}{2}\sqrt{\dfrac{2V(\mathbf{x}^0)}{\theta_i}}.
\end{equation}
It then follows from \eqref{eq:sumnablaf<=sumsumL2xx} that
\begin{align}
&\|\sum_{i\in \mathcal{V}}\nabla f_i(x_i^k)\|\le\sum_{t=0}^{k-1}\max_{i\in\mathcal{V}}\dfrac{L_i}{2\rho_i}\sum_{i\in\mathcal{V}}\rho_i\|x_i^{t+1}-x_i^t\|^2\le\max_{i\in\mathcal{V}}\dfrac{L_i}{2\rho_i}(V(\mathbf{x}^0)-V(\mathbf{x}^k))\le\max_{i\in\mathcal{V}}\dfrac{L_i}{2\rho_i}V(\mathbf{x}^0).\label{eq:gradientsumbound}
\end{align}
Now by substituting \eqref{eq:gradientsumbound} into \eqref{eq:xx<=xx+NthetaNThetaxx+sumnablaf},
\begin{align}
\|\mathbf{x}^k\!-\!\mathbf{x}^*\|\le&\max_{i\in\mathcal{V}}\dfrac{L_i}{\rho_i}\cdot\dfrac{\sqrt{N}}{2\sum_{i\in\mathcal{V}}\bar{\theta}_i}V(\mathbf{x}^0)+\Bigl(\dfrac{N\max_{i\in\mathcal{V}}\bar{\Theta}_i}{\sum_{i\in\mathcal{V}}\bar{\theta}_i}+1\Bigr)\|\bar{\mathbf{x}}^k-\mathbf{x}^k\|\label{ieq:consensusoptimal}.
\end{align}
From Theorem~\ref{theo:firsttheo}, we have $\|\bar{\mathbf{x}}^k-\mathbf{x}^k\|\rightarrow0$ as $k\rightarrow\infty$. Also, \eqref{eq:stepsizeepsilon} guarantees $\max_{i\in\mathcal{V}} \frac{L_i}{\rho_i}\cdot\frac{\sqrt{N}}{2\sum_{i\in\mathcal{V}}\!\bar{\theta}_i}V(\mathbf{x}^0)<\epsilon$. Consequently, $\lim_{k\rightarrow\infty}\|\mathbf{x}^k-\mathbf{x}^*\|<\epsilon$.

\subsection{Proof of Theorem~\ref{thm:rate}}\label{sec:proofofthm:consensuserror}

When $g_{\{i,j\}}(x)=\frac{1}{2}x^Tx$ $\forall \{i,j\}\in\mathcal{E}$, \eqref{scheme} becomes
\begin{align*}
x_i^{k+1}=x_i^k-(\nabla^2 f_i(x_i^k))^{-1}\sum_{j\in \mathcal{N}_i}\alpha_{\{i,j\}}(x_i^k-x_j^k),\quad\forall i\in\mathcal{V},
\end{align*}
or equivalently,
\begin{equation}\label{eq:secmatrix}
\mathbf{x}^{k+1}=\mathbf{x}^k-(\nabla^2F(\mathbf{x}^k))^{-1}(H_{\mathcal{G}}\otimes I_n)\mathbf{x}^k,\quad\forall k\ge0,
\end{equation}
where $H_{\mathcal{G}}$ is equal to $\hat{H}_\mathcal{G}$ in \eqref{matrix:H_G} with $h_{\{i,j\}}=\alpha_{\{i,j\}}$ $\forall\{i,j\}\in\mathcal{E}$. Hence, $H_{\mathcal{G}}=H_{\mathcal{G}}^T$ is positive semidefinite and its rank is $N-1$. Let $S=\{\mathbf{x}=[x_1;\ldots;x_N]\in\mathbb{R}^{nN}:x_1=\dots=x_N\}$ and $S^{\perp}=\{\mathbf{x}=[x_1;\ldots;x_N]\in\mathbb{R}^{nN}:x_1+\dots+x_N=0\}$ be the orthogonal complement of $S$. Then, it can be shown that
\begin{align}
&\text{range}(H_{\mathcal{G}}\otimes I_n)=\text{range}(H_{\mathcal{G}}^{\frac{1}{2}}\otimes I_n)=\text{range}(H_{\mathcal{G}}^{\dagger}\otimes I_n)=S^{\perp}.\label{eq:rangeH} 
\end{align}

Below, we first show that $(\mathbf{x}^k)_{k=0}^\infty$ is a Cauchy sequence. To do so, let $\mathbf{y}^k=(H_{\mathcal{G}}^{\frac{1}{2}}\otimes I_n)\mathbf{x}^k$. Multiplying $(H_{\mathcal{G}}^{\frac{1}{2}}\otimes I_n)$ on both sides of \eqref{eq:secmatrix} yields
\begin{equation*}
\mathbf{y}^{k+1}=\mathbf{y}^k-(H_{\mathcal{G}}^{\frac{1}{2}}\otimes I_n)(\nabla^2F(\mathbf{x}^k))^{-1}(H_{\mathcal{G}}^{\frac{1}{2}}\otimes I_n)\mathbf{y}^k.
\end{equation*}
Moreover, since $\mathbf{y}^k\in S^{\perp}$, we have $\mathbf{y}^k=(H_{\mathcal{G}}H_{\mathcal{G}}^{\dagger}\otimes I_n)\mathbf{y}^k$. It follows that
\begin{align}
&\mathbf{y}^{k+1}=\big[(H_{\mathcal{G}}H_{\mathcal{G}}^{\dagger}\otimes I_n)-(H_{\mathcal{G}}^{\frac{1}{2}}\otimes I_n)(\nabla^2F(\mathbf{x}^k))^{-1}(H_{\mathcal{G}}^{\frac{1}{2}}\otimes I_n)\big]\mathbf{y}^k.\label{eq:ymatrix}
\end{align}
We now bound the term in the bracket of \eqref{eq:ymatrix}. For any $\mathbf{x}\in\mathbb{R}^{nN}$, let $\mathbf{z}=P_{S^\perp}(\mathbf{x})$. Due to \eqref{eq:rangeH},
\begin{align}
&\mathbf{x}^T\!\big[(H_{\mathcal{G}}H_{\mathcal{G}}^{\dagger}\!\otimes\!I_n)\!-\!(H_{\mathcal{G}}^{\frac{1}{2}}\!\otimes\!I_n)(\nabla^2\!F(\mathbf{x}^k))^{-1}\!(H_{\mathcal{G}}^{\frac{1}{2}}\!\otimes\!I_n)\big]\mathbf{x}\displaybreak[0]\nonumber\\
=&\mathbf{z}^T\big[I_N\otimes I_n\!-\!(H_{\mathcal{G}}^{\frac{1}{2}}\!\otimes\!I_n)(\nabla^2F(\mathbf{x}^k))^{-1}(H_{\mathcal{G}}^{\frac{1}{2}}\!\otimes I_n)\big]\mathbf{z}.\displaybreak[0]\label{ieq:norminitial}
\end{align}
Further, since $(H_{\mathcal{G}}^{\frac{1}{2}}\otimes I_n)(\nabla^2F(\mathbf{x}^k))^{-1}(H_{\mathcal{G}}^{\frac{1}{2}}\otimes I_n)$ is positive semidefinite and $z\in S^{\perp}$, we have
\begin{align} 
&\big(1-\lambda_{\max}((H_{\mathcal{G}}^{\frac{1}{2}}\otimes I_n)(\nabla^2F(\mathbf{x}^k))^{-1}(H_{\mathcal{G}}^{\frac{1}{2}}\otimes I_n))\big)\|\mathbf{z}\|^2\displaybreak[0]\nonumber\\ 
\leq&\mathbf{z}^T\big[I_N\otimes I_n-(H_{\mathcal{G}}^{\frac{1}{2}}\otimes I_n)(\nabla^2F(\mathbf{x}^k))^{-1}(H_{\mathcal{G}}^{\frac{1}{2}}\otimes I_n)\big]\mathbf{z}\displaybreak[0]\nonumber\\
\leq&\big(1-\lambda_{2}((H_{\mathcal{G}}^{\frac{1}{2}}\otimes I_n)(\nabla^2F(\mathbf{x}^k))^{-1}(H_{\mathcal{G}}^{\frac{1}{2}}\otimes I_n))\big)\|\mathbf{z}\|^2.\displaybreak[0]\label{ieq:normmedia}
\end{align}
Also note that $\|\mathbf{z}\|\leq\|\mathbf{x}\|$. It then follows from \eqref{ieq:norminitial} and \eqref{ieq:normmedia} that for any $\mathbf{x}\in\mathbb{R}^{nN}$,
\begin{align*}
&\big|\mathbf{x}^T\big[(H_{\mathcal{G}}H_{\mathcal{G}}^{\dagger}\otimes I_n)\!-\!(H_{\mathcal{G}}^{\frac{1}{2}}\!\otimes I_n)(\nabla^2F(\mathbf{x}^k))^{-1}(H_{\mathcal{G}}^{\frac{1}{2}}\otimes I_n)\big]\mathbf{x}\big|\displaybreak[0]\\
&\leq\max\big\{\lambda_{\max}((H_{\mathcal{G}}^{\frac{1}{2}}\otimes I_n)(\nabla^2F(\mathbf{x}^k))^{-1}(H_{\mathcal{G}}^{\frac{1}{2}}\otimes I_n))-1, 1-\lambda_{2}((H_{\mathcal{G}}^{\frac{1}{2}}\otimes I_n)(\nabla^2F(\mathbf{x}^k))^{-1}(H_{\mathcal{G}}^{\frac{1}{2}}\otimes I_n))\big\}\|\mathbf{x}\|^2.
\end{align*} 
From Lemma~\ref{lem:monoto}, $\mathbf{x}^k\in\mathcal{C}_1\times\cdots\times\mathcal{C}_N$. Therefore,
\begin{align}
&\|(H_{\mathcal{G}}H_{\mathcal{G}}^{\dagger}\otimes I_n)\!-\!(H_{\mathcal{G}}^{\frac{1}{2}}\otimes\!I_n)(\nabla^2F(\mathbf{x}^k))^{-1}(H_{\mathcal{G}}^{\frac{1}{2}}\!\otimes I_n)\|\displaybreak[0]\nonumber\\
\leq&\max\big\{\lambda_{\max}((H_{\mathcal{G}}^{\frac{1}{2}}\otimes I_n)(\nabla^2F(\mathbf{x}^k))^{-1}(H_{\mathcal{G}}^{\frac{1}{2}}\otimes I_n))-1,1-\lambda_{2}((H_{\mathcal{G}}^{\frac{1}{2}}\otimes I_n)(\nabla^2F(\mathbf{x}^k))^{-1}(H_{\mathcal{G}}^{\frac{1}{2}}\otimes I_n))\big\}\displaybreak[0]\nonumber\\
\leq&\max\Big\{\dfrac{\lambda_{\max}(H_\mathcal{G})}{\theta}-1,1-\dfrac{\lambda_{2}(H_{\mathcal{G}})}{\Theta}\Big\}.\label{eq:Hnormbound}
\end{align}
Furthermore, note that $\lambda_{2}(H_{\mathcal{G}})$ and $\lambda_{\max}(H_{\mathcal{G}})$ in the above inequality can be bounded as follows:
\begin{align}
&\lambda_{2}(H_\mathcal{G})\geq\min_{\{i,j\}\in\mathcal{E}}\alpha_{\{i,j\}}\lambda_{2}(L_{\mathcal{G}}),\nonumber\displaybreak[0]\\
&\lambda_{\max}(H_\mathcal{G})\leq\max_{\{i,j\}\in\mathcal{E}}\alpha_{\{i,j\}}\lambda_{\max}(L_{\mathcal{G}}),\label{eq:upperbound}
\end{align}
where $L_{\mathcal{G}}$ is the Laplacian matrix of $\mathcal{G}$. Incorporating these bounds into \eqref{eq:Hnormbound} yields
\begin{equation}
\|(H_{\mathcal{G}}H_{\mathcal{G}}^{\dagger}\!\otimes\! I_n)\!-\!(H_{\mathcal{G}}^{\frac{1}{2}}\!\otimes\! I_n)(\nabla^2\!F(\mathbf{x}^k))^{-1}\!(H_{\mathcal{G}}^{\frac{1}{2}}\!\otimes\!I_n)\|\!\le\!q,\label{ieq:q}
\end{equation}
where $q$ is given in the theorem statement. Besides, because of the condition $\alpha_{\{i,j\}}<\theta/(\max_{i\in\mathcal{V}}|\mathcal{N}_i|)$ $\forall\{i,j\}\in\mathcal{E}$ and because $\lambda_{\max}(L_{\mathcal{G}})\leq\min\{N,2\max_{i\in\mathcal{V}}|\mathcal{N}_i|\}$, we have $\max_{\{i,j\}\in\mathcal{E}}\alpha_{\{i,j\}}\lambda_{\max}(L_\mathcal{G})/\theta<2$ and, thus, $0<q<1$.
Combining \eqref{ieq:q} with \eqref{eq:ymatrix} implies that $\|\mathbf{y}^{k+1}\|\leq q\|\mathbf{y}^k\|$. This, along with \eqref{eq:secmatrix}, indicates that
\begin{align}
&\|\mathbf{x}^{k+1}-\mathbf{x}^k\|=\|(\nabla^2F(\mathbf{x}^k))^{-1}(H_{\mathcal{G}}^{\frac{1}{2}}\otimes I_n)\mathbf{y}^k\|\displaybreak[0]\nonumber\\
&\le\dfrac{\lambda_{\max}(H_{\mathcal{G}}^{\frac{1}{2}})}{\theta}\|\mathbf{y}^0\|{q^k}\leq\dfrac{\lambda_{\max}(H_{\mathcal{G}})}{\theta}\|\mathbf{x}^0\|{q^k}.\displaybreak[0]\label{ieq:rate}
\end{align}
As a result, given any $\epsilon>0,$ there exists $\bar{K}_\epsilon=\lceil\log_{q} \frac{\epsilon\theta(1-q)}{\lambda_{\max}(H_{\mathcal{G}})\|\mathbf{x}^0\|}\rceil$ such that $\forall k_1\geq k_2\geq \bar{K}_\epsilon$,
\begin{align*}
&\|\mathbf{x}^{k_1}-\mathbf{x}^{k_2}\|\leq\sum_{l=k_2}^{k_1-1}\|\mathbf{x}^{l+1}-\mathbf{x}^l\|\displaybreak[0]\\
&\leq\dfrac{\lambda_{\max}(H_{\mathcal{G}})}{\theta}\|\mathbf{x}^0\|\sum_{l=k_2}^{k_1-1}{q^l}\leq\dfrac{\lambda_{\max}(H_{\mathcal{G}})}{\theta(1-q)}\|\mathbf{x}^0\|q^{\bar{K}_\epsilon}\leq\epsilon.\displaybreak[0]
\end{align*}
Therefore, $(\mathbf{x}^k)_{k=0}^\infty$ is a Cauchy sequence and is convergent.

From Theorem~\ref{theo:firsttheo}, we know that $\lim_{k\rightarrow\infty}x_i^k=\tilde{x}$ $\forall i\in\mathcal{V}$ for some $\tilde{x}\in\mathbb{R}^n$. Hence, $\lim_{k\rightarrow\infty}\mathbf{x}^k=\tilde{\mathbf{x}}:=[\tilde{x};\dots;\tilde{x}]\in\mathbb{R}^{nN}$. Then, due to \eqref{ieq:rate}, 
\begin{align}
\|\tilde{\mathbf{x}}-\mathbf{x}^k\|&=\|\lim_{t\to\infty}(\mathbf{x}^t-\mathbf{x}^k)\|=\|\lim_{t\to\infty}\!\sum_{l=k}^{t-1}(\mathbf{x}^{l+1}-\mathbf{x}^l)\|\nonumber\displaybreak[0]\\
&\leq\sum_{l=k}^{\infty}\|\mathbf{x}^{l+1}-\mathbf{x}^l\|\leq\dfrac{\lambda_{\max}(H_{\mathcal{G}})}{\theta} \|\mathbf{x}^0\| \sum_{l=k}^{\infty}{q^l}\nonumber\displaybreak[0]\\
&\leq\dfrac{\lambda_{\max}(H_{\mathcal{G}})}{\theta(1-q)}\|\mathbf{x}^0\|q^k,\quad\forall k\ge0.\nonumber
\end{align}
This and \eqref{eq:upperbound} lead to \eqref{eq:xx<=Cqk}.

Finally, we provide a bound on $\|\tilde{\mathbf{x}}-\mathbf{x}^*\|$. Again, from Theorem~\ref{theo:firsttheo}, $\lim_{k\to\infty}\|\mathbf{x}^k-\bar{\mathbf{x}}^k\|=0$, where $\bar{\mathbf{x}}^k=[\bar{x}^k;\ldots;\bar{x}^k]\in\mathbb{R}^{nN}$ and $\bar{x}^k=\frac{1}{N}\sum_{i\in\mathcal{V}}x_i^k$. It follows from \eqref{ieq:consensusoptimal} that
\begin{align*}
\|\tilde{\mathbf{x}}-\mathbf{x}^*\|\leq\max_{i\in\mathcal{V}}\dfrac{L_i}{\rho_i}\cdot\dfrac{\sqrt{N}}{2\sum_{i\in\mathcal{V}}\bar{\theta}_i}V(\mathbf{x}^0),
\end{align*}
where $\rho_i>0$ is given by \eqref{eq:rhoi}. Moreover, for each $\{i,j\}\in\mathcal{E}$, $\Gamma_{\{i,j\}}=1$ as $g_{\{i,j\}}=\frac{1}{2}x^Tx$, so that \eqref{eq:optimalityerror} holds.


\bibliographystyle{IEEEtran}
\bibliography{reference}

\begin{IEEEbiography}
	[{\includegraphics[width=1in,height=1.25in,clip,keepaspectratio]{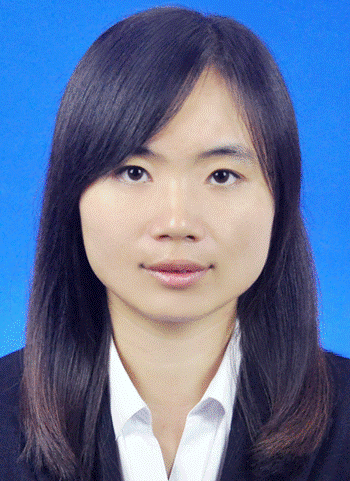}}]
{Hejie Wei} received the Ph.D. degree in Computational Mathematics from Fudan University, Shanghai, China, in 2017. From 2017 to 2019, She was a postdoctoral researcher in the School of Information Science and Technology at ShanghaiTech University, Shanghai, China. Since 2019, She has been a lecturer in the school of Statistics and Mathematics at Shanghai Lixin University of Accounting and Finance. Her research interests include distributed optimization and numerical optimization. 
\end{IEEEbiography}
\begin{IEEEbiography}
	[{\includegraphics[width=1in,height=1.25in,clip,keepaspectratio]{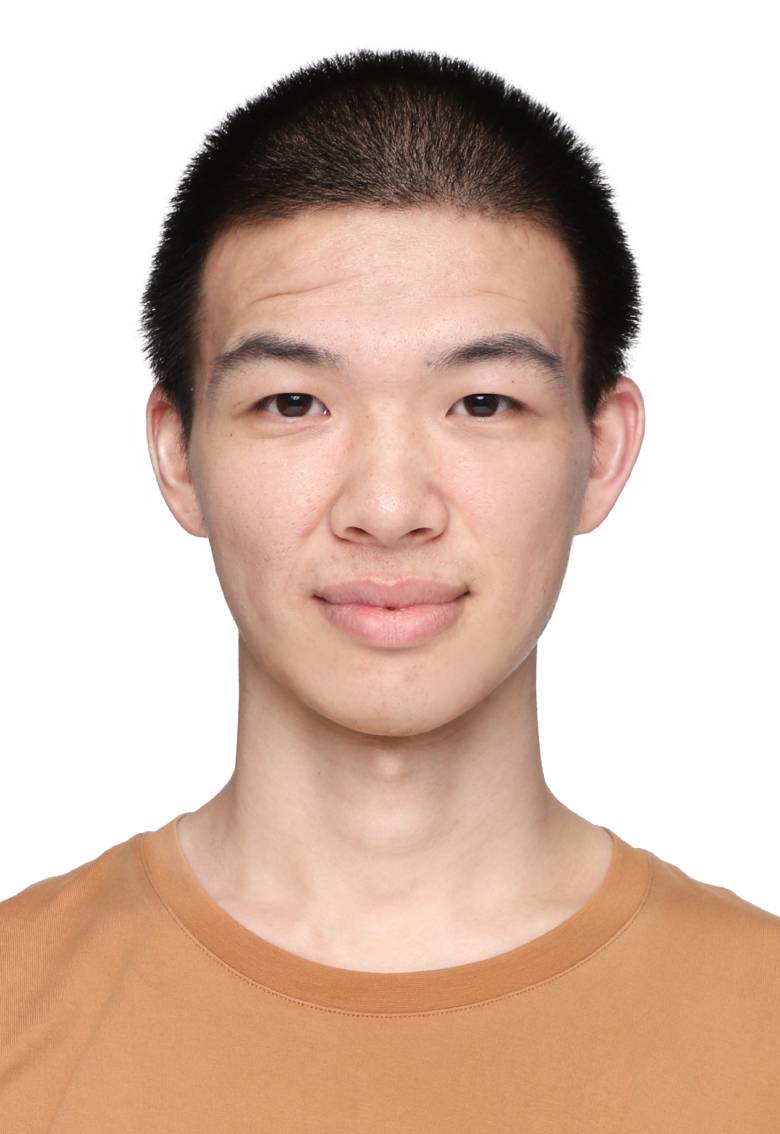}}]{Zhihai Qu} received the B.S. degree in Electronic Information Science and Technology from Hefei University of Technology, Hefei, China, in 2017. He is currently pursuing his Master degree in the School of Information Science and Technology at ShanghaiTech University, Shanghai, China. His research interests include distributed optimization and large-scale optimization algorithms.
\end{IEEEbiography}

\begin{IEEEbiography}
	[{\includegraphics[width=1in,height=1.25in,clip,keepaspectratio]{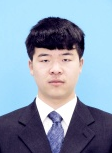}}]{Xuyang Wu}
	(SM'17) received the B.S. degree in Information and Computing Science from Northwestern Polytechnical University, Xi'an, China, in 2015. He is currently pursuing his Ph.D. degree in the School of Information Science and Technology at ShanghaiTech University, Shanghai, China. His research interests include distributed optimization and large-scale optimization algorithms.
	
\end{IEEEbiography}

\begin{IEEEbiography}
	[{\includegraphics[width=1in,height=1.25in,clip,keepaspectratio]{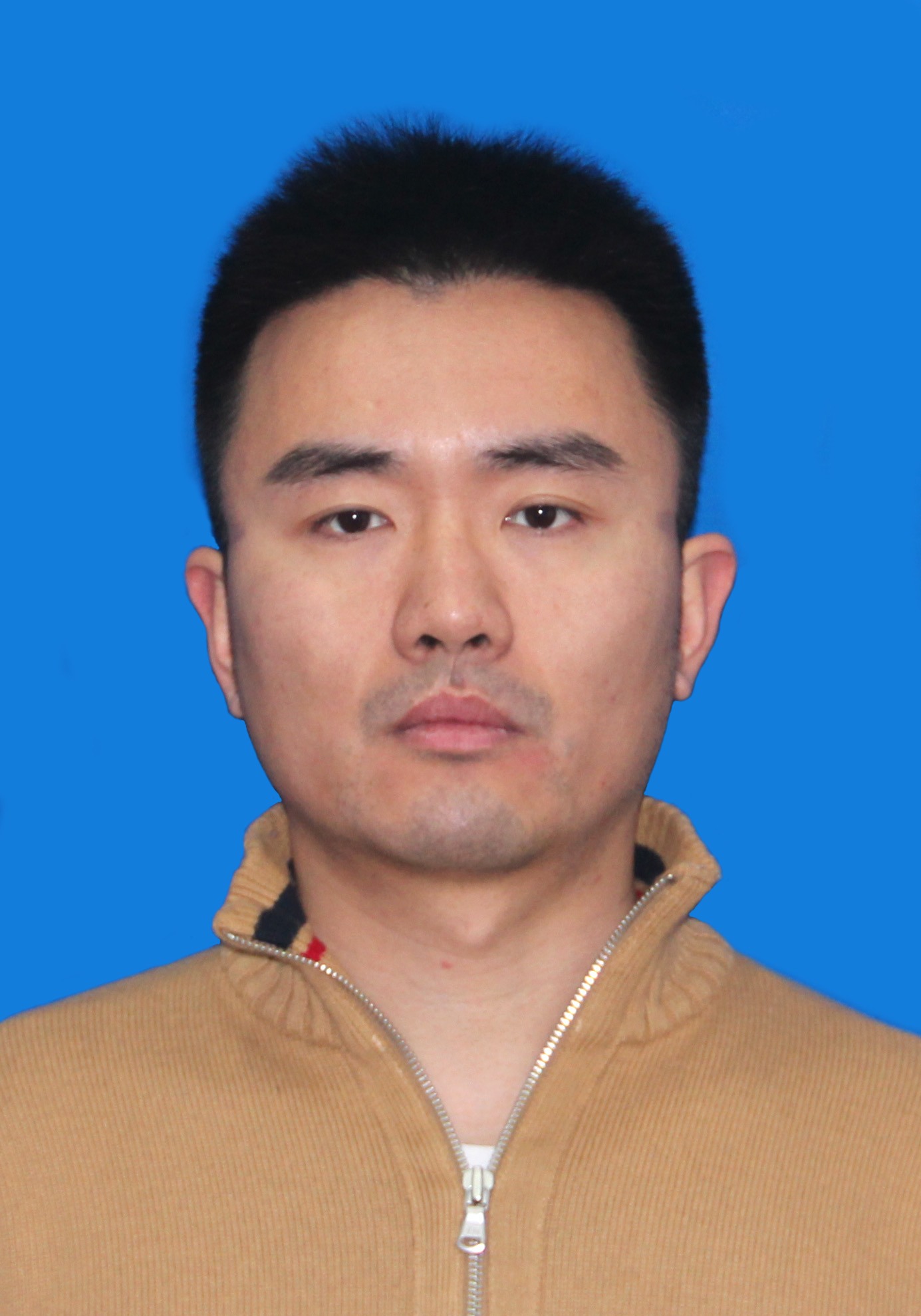}}] {Hao Wang} (M'18) is currently an Assistant Professor with the School of Information Science and Technology at ShanghaiTech University, Shanghai, China. He received the B.S. and M.S. degrees in mathematics from Beihang University, Beijing, China, and the Ph.D. degree from the Industrial and Systems Engineering Department, Lehigh University, Bethlehem, PA, USA in 2015. He was with ExxonMobil Corporate Strategic Research Lab, Mitsubishi Electric Research Lab, and GroupM R$\&$D. His interests include nonlinear optimization and machine learning.
	
\end{IEEEbiography}

\begin{IEEEbiography}[{\includegraphics[width=1in,height=1.25in,clip,keepaspectratio]{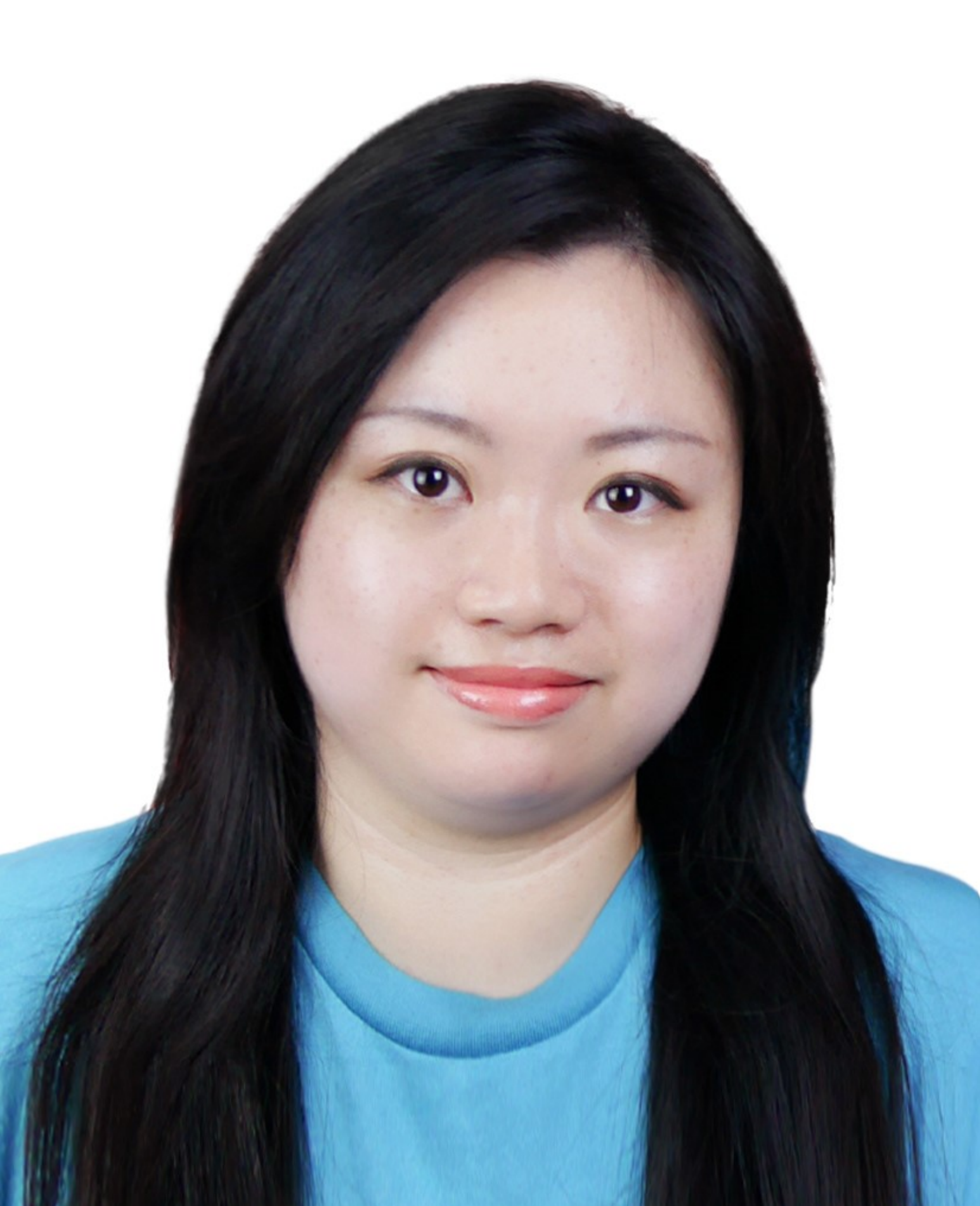}}]{Jie Lu} (SM'08-M'13)
	received the B.S. degree in Information Engineering from Shanghai Jiao Tong University, China, in 2007, and the Ph.D. degree in Electrical and Computer Engineering from the University of Oklahoma, USA, in 2011. From 2012 to 2015 she was a postdoctoral researcher with KTH Royal Institute of Technology, Stockholm, Sweden, and with Chalmers University of Technology, Gothenburg, Sweden. Since 2015, she has been an assistant professor in the School of Information Science and Technology at ShanghaiTech University, Shanghai, China. Her research interests include distributed optimization, optimization theory and algorithms, and networked dynamical systems.
\end{IEEEbiography}

\end{document}